\theoremstyle{plain}
\newtheorem{thm}{Theorem}
\newtheorem{prop}[thm]{Proposition}
\newtheorem{lem}[thm]{Lemma}
\theoremstyle{definition}
\newtheorem{defi}[thm]{Definition}
\newtheorem*{acknowledgments}{Acknowledgments}
\newcommand{\cH}{\mathcal{H}}
\newcommand{\cF}{\mathcal{F}}
\newcommand{\cK}{\mathcal{K}}
\newcommand{\cO}{\mathcal{O}}
\newcommand{\clo}[1]{\overline{#1}}
\newcommand{\into}{\hookrightarrow}
\newcommand{\restrict}[2]{\left.{#1}\right|_{#2}}
\newcommand{\norm}[1]{\left\lVert {#1}\right\rVert}
\newcommand{\abs}[1]{\left| {#1}\right|}
\newcommand{\ndim}[1]{\operatorname{dim}_\mathrm{nuc}({#1})}
\title{Nuclear dimension of graph $C^*$-algebras with Condition~(K)}
\author{Gregory Faurot and Christopher Schafhauser}
\address{Department of  Mathematics, University of Nebraska - Lincoln, Lincoln, NE, USA}
\email{gfaurot2@huskers.unl.edu}
\email{cschafhauser2@unl.edu}
\thanks{This project was partially supported by the second author's NSF grant DMS-2000129.}
\date{\today}
\subjclass{46L05}
\begin{document}
	
	\begin{abstract}
		We prove that for any countable directed graph $E$ with Condition~(K), the associated graph $C^*$-algebra $C^*(E)$ has nuclear dimension at most $2$. Furthermore, we provide a sufficient condition producing an upper bound of $1$.
	\end{abstract}
	
	\maketitle
	
	\renewcommand{\thethm}{\Alph{thm}}
	\section*{Introduction}
	The notion of nuclear dimension, introduced by Winter and Zacharias in \cite{WZ10}, is a generalization of topological covering dimension to $C^*$-algebras.  A $C^*$-algebra has nuclear dimension at most $n \geq 0$ if there are ``($n$+1)-colored'' finite rank approximations of the identity map, in the sense that there are a finite dimensional $C^*$-algebra $F$, a completely positive, contractive map $\psi \colon A \rightarrow F$, and a completely positive map $\phi \colon F \rightarrow A$ such that $\phi \circ \psi$ approximates the identity map pointwise in norm and $\phi$ decomposes as a sum of $n+1$ orthogonality preserving completely positive, contractive maps with mutually orthogonal supports (see Definition~\ref{dimnuc}).  
 
    Nuclear dimension has proved invaluable in the classification of simple, nuclear $C^*$-algebras; see \cite{GLN20A, GLN20B, EGLN23, TWW17}, for example.  In particular, the exotic examples of simple, nuclear $C^*$-algebras constructed in \cite{Villadsen1, Villadsen2, Rordam, Toms} have infinite nuclear dimension.  This led to the finiteness of the nuclear dimension becoming a key regularity hypothesis in Elliott's classification program.
    
    For separable, unital, simple, nuclear, infinite dimensional $C^*$-algebras, the Toms--Winter conjecture (\cite[Conjecture~9.3]{WZ10}; cf.\ \cite{Elliott-Toms, Winter10}) states that finite nuclear dimension is equivalent to tensorial absorption of the Jiang--Su algebra $\mathcal{Z}$, defined in \cite{JiangSu}.  The remarkable results of Castillejos, Evington, Tikuisis, White, and Winter in \cite{CETWW21, CE20}, building on a long line of work going back to Matui and Sato in \cite{MatuiSato1, MatuiSato2}, have completely determined the possible values of nuclear dimension for separable, simple $C^*$-algebras. The papers \cite{CETWW21, CE20} state that the nuclear dimension of a separable, simple $C^*$-algebra $A$ is given as follows:
	$$\ndim{A}=\begin{cases}
		0 & \text{ if $A$ is AF;} \\
		1 & \text{ if $A$ is nuclear, $\mathcal{Z}$-stable, and not AF;}\\
		\infty & \text{ otherwise.}
	\end{cases}$$
	
	The nuclear dimension of non-simple $C^*$-algebras is less understood.  For a compact metric space $X$, the nuclear dimension of $C(X)$ is precisely $\dim(X)$.  However, outside the type I setting, there is growing evidence that the nuclear dimension of a $C^*$-algebra is either infinite or very small.  For example, using Gabe's generalization of the Kirchberg--Phillips theorem \cite{Gabe1,Gabe2}, it was shown by Bosa, Gabe, Sims, and White in \cite{BGSW22} that a separable, nuclear, $\mathcal O_\infty$-stable $C^*$-algebra has nuclear dimension 1, improving Szab\'o's earlier upper bound of 3 obtained in \cite{Szabo}.  In the stably finite setting, Tikuisis and Winter showed $C(X) \otimes \mathcal Z$ has nuclear dimension 1 or 2 (\cite[Theorem~4.1]{TW14}).
    
    Finding the precise values of nuclear dimension has proven to be a difficult problem.  In Winter and Zacharias's original paper on nuclear dimension (\cite{WZ10}), they proved the nuclear dimension of the Toeplitz algebra $\mathcal{T}$ to be either $1$ or $2$.  The lower bound follows since $\mathcal T$ is not AF.  The upper bound arises from realizing $\mathcal T$ as an extension of $C(\mathbb T)$ by $\mathcal K$, the $C^*$-algebra of compact operators on a separable, infinite dimensional Hilbert space, and combining a 2-colored approximation of $C(\mathbb T)$ with a 1-colored approximation of $\mathcal K$ to obtain a 3-colored approximation of $\mathcal T$.  The nuclear dimension of $\mathcal T$ was eventually shown to be 1 by Brake and Winter in \cite{BW19} by introducing a method of reusing one of the colors in the approximation of $C(\mathbb T)$ to approximate $\mathcal K$.  
    The Brake--Winter technique was refined in \cite{GT22} to show that for a compact metric space $X$, a unital, essential extension of $C(X)$ by $\mathcal K$ has nuclear dimension $\dim(X)$.  
    Further, combining the Brake--Winter technique with Gabe's generalization of the Kirchberg--Phillips theorem, the Cuntz--Toeplitz algebras $\mathcal T_n$ were shown to have nuclear dimension 1 in \cite{15A20}.  This was further improved by Evington in \cite{E22}, producing an improved bound on nuclear dimension of certain extensions. In particular, any full extension of an $\cO_\infty$-stable $C^*$-algebra by a stable AF-algebra has nuclear dimension $1$.
    
	The present work is concerned with the problem of computing the precise value of nuclear dimension for graph $C^*$-algebras.  Given a directed graph $E$, Kumjian, Pask, and Raeburn constructed an associated $C^*$-algebra $C^*(E)$ in \cite{KPR98}, defined by associating each edge to a partial isometry with restrictions on the range and source projections determined by the structure of the graph (see Definition~\ref{ckdef}).  Graph $C^*$-algebras and related objects have become a natural test case for obtaining optimal bounds for nuclear dimension.  For example, the results of \cite{RSS15}, which builds on \cite{Enders}, proving all UCT Kirchberg algebras have nuclear dimension 1 via examining the nuclear dimension of certain graph $C^*$-algebras (or rather, 2-graph $C^*$-algebras) motivated the push to show that all Kirchberg algebras have nuclear dimension 1 in \cite{BBSTWW19} (improving the previous upper bound of 3 in \cite{MatuiSato2}).
 
    For a finite graph $E$, it follows from the permanence properties of finite nuclear dimension established by Winter and Zacharias in \cite{WZ10} and the general theory of graph $C^*$-algebras that $C^*(E)$ has finite nuclear dimension.  Indeed, the structure of gauge-invariant ideals in graph $C^*$-algebras (see \cite[Theorem~4.1]{BPRS00}) implies that all such graph $C^*$-algebras are defined by recursively taking extensions of AF-algebras, UCT Kirchberg algebras, and $C^*$-algebras stably isomorphic to $C(\mathbb T)$, all of which have finite nuclear dimension.  These naive upper bounds are typically not sharp; this is already seen in the cases of the Toeplitz and Cuntz--Toeplitz algebras discussed above.  In fact, we do not know of any graph $E$ where the nuclear dimension of $C^*(E)$ is known to be bigger than 1.

    We restrict our attention to graphs with Condition~(K), as introduced in \cite{KPR98, DT05}, which can be regarded as a kind of freeness condition on the dynamics of the graph---the definition is recalled in Definition~\ref{defK} below.  At the $C^*$-algebraic level, a graph $E$ has Condition~(K) if and only if all ideals of $C^*(E)$ are invariant under the gauge action, or equivalently, no subquotient of $C^*(E)$ is stably isomorphic to $C(\mathbb T)$.  In this case, Ruiz, Sims, and Tomforde showed in \cite[Theorem~5.1]{RST15} that, under the additional restriction that every vertex in $E$ receives a path from a  cycle in $E$, the nuclear dimension of $C^*(E)$ is at most 2.  The first of our main results removes this combinatorial restriction, obtaining the same bound for all graphs with Condition~(K).  This is the first finite bound which covers all graphs with Condition~(K).
    
	\begin{thm}\label{thmA}
		If $E$ is a countable directed graph with Condition~{\rm (K)}, then $C^*(E)$ has nuclear dimension at most $2$.
	\end{thm}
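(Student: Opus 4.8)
The strategy is to exhibit $C^*(E)$ as an extension whose ideal is AF and whose quotient is $\cO_\infty$-stable, and then to apply the permanence of nuclear dimension under extensions from \cite{WZ10}. Let $D\subseteq E^0$ be the set of vertices that cannot reach any cycle, i.e.\ $v\in D$ if and only if there is no path from $v$ to a vertex lying on a cycle of $E$. I would first check that $D$ is hereditary and saturated: it is hereditary because an edge emitted by a vertex that reaches no cycle must land at another such vertex, and it is saturated because a regular vertex all of whose successors reach no cycle can itself reach no cycle (in particular it lies on no cycle). Since $E$ has Condition~(K), every ideal of $C^*(E)$ is gauge invariant, so by the classification of gauge-invariant ideals (\cite[Theorem~4.1]{BPRS00}) the ideal $I_D$ generated by $\{p_v : v\in D\}$ gives $C^*(E)/I_D\cong C^*(E/D)$ for the quotient graph $E/D$ on $E^0\setminus D$, while $I_D$ is Morita equivalent to the graph algebra of the subgraph of $E$ on $D$; the bookkeeping for infinite emitters and breaking vertices is handled by the structure theory of \cite{BPRS00, DT05}.

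By construction the subgraph on $D$ has no cycles, so its graph algebra is AF; as both nuclear dimension and AF-ness are preserved under Morita equivalence, $\ndim{I_D}=0$. On the other side, every vertex of $E/D$ can reach a cycle (the connecting path avoids $D$), and $E/D$ again satisfies Condition~(K), hence Condition~(L); by the characterization of purely infinite graph algebras, $C^*(E/D)$ is purely infinite, and being a separable, nuclear, purely infinite graph algebra it is $\cO_\infty$-stable by the Kirchberg--R\o{}rdam theorem. Therefore $\ndim{C^*(E/D)}\le 1$ by \cite{BGSW22}. Feeding these two bounds into the extension estimate $\ndim{C^*(E)}\le \ndim{I_D}+\ndim{C^*(E/D)}+1$ of \cite{WZ10} yields $\ndim{C^*(E)}\le 0+1+1=2$, as desired.

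The main obstacle is this middle step: verifying that $C^*(E/D)$ is genuinely $\cO_\infty$-stable, which requires knowing that a purely infinite graph algebra is strongly purely infinite so that Kirchberg--R\o{}rdam applies, and---just as delicately---controlling the non-row-finite phenomena (infinite emitters and breaking vertices) so that $I_D$ and $C^*(E/D)$ really are the graph algebras claimed. This approach sidesteps the combinatorial hypothesis of \cite{RST15} entirely: rather than requiring every vertex to be reached by a cycle, it isolates the acyclic part of $E$ as an AF ideal and pushes all of the cyclic dynamics into the $\cO_\infty$-stable quotient. It also indicates where the improved bound of $1$ should come from: when $D=\emptyset$---equivalently, every vertex of $E$ reaches a cycle---the algebra $C^*(E)$ is itself $\cO_\infty$-stable and \cite{BGSW22} gives the bound directly, while more generally a fullness hypothesis on $I_D$ should let the full-extension result of \cite{E22} remove the $+1$.
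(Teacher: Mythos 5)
Your decomposition is, up to the opposite edge-direction convention, exactly the one the paper uses: your set $D$ of vertices that cannot reach a cycle plays the role of the set $H$ of Lemma~\ref{seth}, the corresponding ideal is AF, and the quotient is the graph algebra of the subgraph in which every vertex connects to a cycle; feeding this into the extension bound of \cite{WZ10} is also the paper's final step. The genuine gap is the step you yourself flag as ``the main obstacle'': $\cO_\infty$-stability of the quotient. Your proposed route---purely infinite by the characterization of purely infinite graph algebras, then $\cO_\infty$-stable by Kirchberg--R\o rdam---does not close, because the Kirchberg--R\o rdam absorption theorem requires \emph{strong} pure infiniteness, and the implication from pure infiniteness to strong pure infiniteness is open in general; you would have to assemble and justify an additional chain (e.g.\ via real rank zero for Condition~(K) graphs) that you have not supplied. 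The paper avoids this entirely: Theorem~\ref{Oinf_stable} proves $\cO_\infty$-stability by induction along a maximal chain of saturated hereditary subsets, using only that each simple subquotient is a unital Kirchberg algebra (hence $\cO_\infty$-stable by \cite{KP00}) together with the permanence of $\cO_\infty$-stability under stable isomorphism and under extensions \cite{TW07}. That induction requires the ideal lattice to be finite, i.e.\ the graph to be finite.

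This points to the second missing piece: you run the argument on an arbitrary countable graph, but nothing in your proposal establishes $\cO_\infty$-stability (or even pure infiniteness in the required strong sense) of the quotient when the graph, and hence its ideal lattice, is infinite. The paper devotes Section~\ref{sec:reduction} to reducing to finite graphs: a Drinen--Tomforde desingularization \cite{DT05} handles infinite emitters and reduces to the row-finite case, and the (K)-entrance completions of finite subgraphs, combined with the Cuntz--Krieger uniqueness theorem, produce an inductive limit decomposition $C^*(E)\cong\varinjlim C^*(E_i)$ over finite graphs with Condition~(K), after which lower semicontinuity of nuclear dimension under inductive limits gives the bound. Without this reduction, or a direct infinite-graph proof of $\cO_\infty$-stability of the quotient, your argument covers at most the finite case---and even there only once the strong pure infiniteness issue is resolved. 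Your closing remarks about the bound of $1$ (stability and fullness of the ideal letting \cite{E22} remove the $+1$) do correctly anticipate the strategy of Theorem~\ref{thmB}.
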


    Using an inductive limit argument motivated by \cite{RS04} and \cite{JP02} and the lower 
    semicontinuity of nuclear dimension under inductive limits, Theorem~\ref{thmA} reduces to the case of finite graphs.  In this setting, under a further combinatorial restriction on the graph, we obtain the following improvement.  For a vertex $v$ and a cycle $\mu$ in a graph $E$, we say $v$ \emph{connects to} $\mu$ if there is a path in $E$ with source $v$ whose range is a vertex on $\mu$. The extra conditions in Theorem~\ref{thmB} are similar in flavor to that of Ruiz, Sims, and Tomforde in \cite{RST15} but are unrelated and serve a different role in the proof.
	
    \begin{thm}\label{thmB}
		Let $E$ be a finite graph with Condition~{\rm (K)} such that each source $v$ in $E$ satisfies one of the following conditions:
		\begin{enumerate}
			\item\label{B1} $v$ connects to every cycle in $E$;
			\item\label{B2} $v$ connects to no cycles in $E$.
		\end{enumerate}
		Then $C^*(E)$ has nuclear dimension at most $1$.
	\end{thm}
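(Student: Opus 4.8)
The plan is to realize $C^*(E)$, after stabilizing, as built from a stable AF ideal and an $\cO_\infty$-stable quotient, and then to invoke Evington's extension bound \cite{E22}; the real work is in arranging that the relevant extension is full. Since nuclear dimension is unchanged under tensoring with $\cK$ and is the maximum over finite direct sums, it suffices to bound $\ndim{C^*(E)\otimes\cK}$. Let $H\subseteq E^0$ be the set of vertices connecting to no cycle; as noted in the discussion preceding Theorem~\ref{thmB} this set is hereditary and saturated, and since the subgraph on $H$ is acyclic the associated ideal $I_H$ is AF, so $I_H\otimes\cK$ is a stable AF algebra. The quotient $C^*(E)/I_H$ is canonically $C^*(F)$, where $F$ is the subgraph on the vertices that do connect to a cycle; every vertex of $F$ connects to a cycle, and since Condition~(K) forces Condition~(L), $C^*(F)$ is purely infinite, hence strongly purely infinite, hence $\cO_\infty$-stable by Kirchberg--R\o rdam. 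This yields an extension
\[
0 \longrightarrow I_H\otimes\cK \longrightarrow C^*(E)\otimes\cK \longrightarrow C^*(F)\otimes\cK \longrightarrow 0
\]
of an $\cO_\infty$-stable algebra by a stable AF algebra, and by \cite{E22} it would remain only to show this extension is full.

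First I would translate fullness into graph combinatorics. Writing $\beta\colon C^*(F)\to M(I_H)/I_H$ for the Busby map, the image $\beta(p_v)$ records exactly how a vertex $v\in F$ connects into $H$, so $\beta$ is full precisely when every nonzero subquotient of the purely infinite algebra $C^*(F)$ --- equivalently, every terminal cycle component of $F$ --- connects into every minimal piece (sink component) of $I_H$. I would then try to read off this connectivity from the hypotheses on the sources of $E$, which segregate every source into one feeding all cycles or one feeding none.

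The hard part --- and the step I expect to be the main obstacle --- is precisely this fullness reduction, because the displayed extension need \emph{not} be full on the nose even under the source hypothesis: a source feeding every cycle can still leave a cycle component connected to some but not all sink components. Consequently I would not apply \cite{E22} to the single extension above, but instead work inductively along the lattice of gauge-invariant ideals, which for a Condition~(K) graph is the lattice of hereditary saturated vertex sets. At each stage one isolates a minimal gauge-invariant ideal --- either AF (a sink component) or $\cO_\infty$-stable (a cycle component) --- and seeks to present the next layer either as a full AF-by-$\cO_\infty$ extension, to which \cite{E22} applies, or as an $\cO_\infty$-stable direct summand, of nuclear dimension $1$ by pure infiniteness. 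The role of the source condition is to make this induction close: by forbidding a source from meeting a proper, nonempty set of cycles, it constrains how finite (sink) and infinite (cycle) components may interleave below a common source, and I expect it to be exactly what rules out the ``partially connected'' configurations that obstruct fullness while still being compatible with the improved bound of $1$ rather than the bound of $2$ supplied by Theorem~\ref{thmA}. Pinning down the precise combinatorial consequence of the source condition that guarantees fullness (or a splitting) at each inductive stage, and verifying that the pieces recombine without raising the nuclear dimension, is the technical heart of the argument.
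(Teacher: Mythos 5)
Your high-level strategy---realize $C^*(E)$ as an extension of an $\cO_\infty$-stable quotient by a stable AF ideal and apply Evington's theorem \cite{E22}---is exactly the paper's, and you are right that fullness is the crux. But the proposal has a genuine gap: the step you defer as ``the technical heart'' is the entire content of the theorem, and the idea that closes it is a direct-summand splitting, not the ideal-lattice induction you sketch. Concretely: let $S$ be the set of sources of type \ref{B2} and let $H'$ be the set of vertices receiving paths only from sources in $S$. This set is saturated and hereditary, no vertex of $H'$ connects to a cycle, so only finitely many paths originate in $H'$ and $I_{H'}$ is a \emph{unital, finite-dimensional} ideal; hence $C^*(E)\cong I_{H'}\oplus C^*(E\setminus H')$ and one is reduced to the case where every source connects to every cycle. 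In that case, for the hereditary saturated set $H$ of vertices not in the range of any infinite path, one computes $I_H\cong\cK^{\oplus m}$ with $m$ the number of sources---already stable, so no tensoring with $\cK$ is needed---and the extension $0\to I_H\to C^*(E)\to C^*(E\setminus H)\to 0$ \emph{is} full: $M(I_H)\cong B(\cH)^{\oplus m}$ with one summand per source, and for $w$ on a cycle the image of $p_w$ in the summand indexed by a source $v$ dominates both $s_\nu s_\nu^*$ for a path $\nu$ from $v$ to $w$ and $s_\mu s_\mu^*\sim p_w$ for the cycle $\mu$ at $w$, hence is a nonzero infinite-rank projection. Your worry that ``a source feeding every cycle can still leave a cycle component connected to some but not all sink components'' rests on a misreading of how $I_H$ decomposes: its minimal summands are indexed by \emph{sources}, not sinks, so once every source connects to every cycle nothing obstructs fullness.

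Two further problems. First, the fallback induction is not a safe substitute: Evington's bound needs the ideal to be stable AF and the quotient $\cO_\infty$-stable, in that order, whereas peeling off minimal gauge-invariant ideals typically presents the layers the other way around ($\cO_\infty$-stable ideal, AF quotient), where only the Winter--Zacharias estimate \cite{WZ10} applies and each stage adds $1$; it is not clear your induction terminates with the bound $1$ rather than reproducing Theorem~\ref{thmA}. Second, ``Condition~(K) forces Condition~(L), so $C^*(F)$ is purely infinite, hence strongly purely infinite'' is not a valid chain of implications for non-simple algebras---purely infinite does not imply strongly purely infinite in general, and pure infiniteness of a non-simple graph algebra does not follow from Condition~(L) alone. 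The paper instead proves $\cO_\infty$-stability of $C^*(E\setminus H)$ by induction along a maximal chain of saturated hereditary sets, using that each simple subquotient is a Kirchberg algebra and that $\cO_\infty$-stability passes to extensions. Finally, note that with the paper's conventions the set of vertices connecting to no cycle need not be hereditary; the correct hereditary set is the set of vertices that do not receive an infinite path.
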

 
	The bound in Theorem~\ref{thmB} is optimal. In the setting of this theorem, the nuclear dimension of $C^*(E)$ is characterized as follows: 
	$$\ndim{C^*(E)}=\begin{cases}
			0 & \text{ if $E$ has no cycles;} \\
			1 & \text{ if $E$ has a cycle.}
		\end{cases}$$
 
    Our techniques are combinatorial in nature, relying upon the structure of the graphs themselves.  In the case of a finite graph $E$ with Condition~(K), let $I$ denote the ideal of $C^*(E)$ generated by the projections corresponding to sources in the graph, and consider the extension
    \begin{equation*}
        0 \longrightarrow I \longrightarrow C^*(E) \longrightarrow C^*(E)/I \longrightarrow 0.
    \end{equation*}
    We show that $I$ is AF and $C^*(E) / I$ is $\mathcal O_\infty$-stable, which yields the upper bound of 2 in Theorem~\ref{thmA}.  Further, in the setting of Theorem~\ref{thmB}, after removing a finite dimensional direct summand of $C^*(E)$, we reduce to the case in which all sources in $E$ are as in Theorem~\ref{thmB}\ref{B1}. In this case, the extension above is full and $I$ is stable.  Then Evington's result from \cite{E22} lowers the bound on the nuclear dimension of $C^*(E)$ from 2 to 1, proving Theorem~\ref{thmB}. The inductive limit argument of Section~\ref{sec:reduction} involves adding sources to finite subgraphs of an infinite graph $E$. We do not know of a natural condition to place on $E$ so that these additional sources satisfy the hypotheses of Theorem~\ref{thmB}.

    After recalling some preliminary material on graph $C^*$-algebras and nuclear dimension in Section~\ref{sec:prelim}, we establish the reduction to finite graphs in Section~\ref{sec:reduction}.  Sections~\ref{sec:thmA} and~\ref{sec:thmB} are devoted to the proofs of Theorems~\ref{thmA} and~\ref{thmB}, respectively.

\setcounter{thm}{0}
\numberwithin{thm}{section}

\begin{acknowledgments}
The authors would like to thank Stuart White for an enlightening discussion on nuclear dimension and the contents of \cite{BGSW22} and \cite{15A20}. We also thank the referee for their helpful comments.
\end{acknowledgments}

\section{Preliminaries}\label{sec:prelim}
A completely positive, contractive (cpc) map $\phi\colon A \to B$ is \emph{order zero} if for all $a,b \in A$ with $ab=0$, we have $\phi(a)\phi(b)=0$ (see \cite{KW04}). Winter and Zacharias gave the following definition of nuclear dimension in \cite{WZ10}.

\begin{defi}\label{dimnuc}
Given a $C^*$-algebra $A$, the \emph{nuclear dimension} of $A$, denoted $\mathrm{dim}_\mathrm{nuc}(A)$, is at most $n$ if, given a finite set $\cF \subset A$ and $\epsilon>0$, there exist finite dimensional $C^*$-algebras $F_0, \dots, F_n$ and maps \begin{equation*} 
A \overset\psi\longrightarrow \bigoplus_{i=0}^n F_i \overset\phi\longrightarrow A,
\end{equation*}so that
\begin{enumerate}
	\item $\norm{a-\phi(\psi(a))}<\epsilon$ for all $a \in \cF$,
	\item $\psi$ is cpc, and
	\item $\restrict{\phi}{F_i}$ is cpc and order zero for all $i$.
\end{enumerate}
\end{defi}
It is easy to see that finite dimensional $C^*$-algebras have nuclear dimension 0. By \cite[Proposition~2.3]{WZ10}, it follows that all AF-algebras have nuclear dimension 0. The converse holds by \cite[Example~6.1(i)]{KW04}; that is, any separable $C^*$-algebra with nuclear dimension 0 is AF.

Throughout this paper, $E=(E^0, E^1, r,s)$ will be a directed graph with vertex set $E^0$, edge set $E^1$, and range and source maps $r,s\colon E^1 \to E^0$. A graph $E$ is \emph{row-finite} if $\abs{r^{-1}(v)}<\infty$ for all $v \in E^0$. The following definition is due to \cite{KPR98, DT05}, although we are using the convention of Raeburn's book \cite{R05}. 

\begin{defi}\label{ckdef}
	A \emph{Cuntz--Krieger $E$-family} is a family $(s,p)$ of mutually orthogonal projections $\{p_v: v\in E^0\}$ and partial isometries $\{s_e: e\in E^1\}$ with pairwise orthogonal ranges subject to the following conditions:
	\begin{enumerate}
		\item $s_e^*s_e=p_{s(e)}$; \label{ckdef1}
		\item $p_{r(e)}s_e=s_e$ for all $e \in E^1$; \label{ckdef2}
		\item $p_v =\sum_{\{e \in E^1: r(e)=v\}}s_es_e^*$ for every $v \in E^0$ such that $0<\abs{r^{-1}(v)}<\infty$. \label{ckdef3}
	\end{enumerate}
\end{defi}
The graph $C^*$-algebra $C^*(E)$ is the universal $C^*$-algebra generated by a universal Cuntz--Krieger $E$-family $(s,p)$. We refer the reader to \cite{R05} for additional background on graphs and their $C^*$-algebras. Next, we describe Condition~(K) of Kumjian, Pask, Raeburn, and Renault (\cite{KPRR97}) and their characterization of ideals of $C^*(E)$ when $E$ is row-finite and satisfies Condition~(K).  A \emph{return path} for a vertex $v$ is a path $\mu=\mu_1\mu_2\dots\mu_m$ with $s(\mu)=r(\mu)=v$ and $r(\mu_i)\neq v$ for $1 < i \leq m$.

\begin{defi}\label{defK}
A directed graph $E$ is said to have \emph{Condition}~(K) if  for each vertex $v \in E^0$, one of the following conditions holds:
\begin{enumerate}
	\item $v$ does not lie on a cycle in $E$;
	\item there are at least two return paths for $v$.
\end{enumerate}
\end{defi}

For a row-finite graph $E$ with Condition~(K), the ideals of $C^*(E)$ are in bijection with certain subsets of $E^0$, which we now describe.

\begin{defi}
	A subset $H \subset E^0$ is said to be \emph{hereditary} if whenever $v \in H$, $w \in E^0$, and there exists a path from $w$ to $v$, we have $w \in H$. A \emph{saturated} subset $H$ contains all vertices $v$ satisfying $r^{-1}(v) \neq \emptyset$ and $s(r^{-1}(v)) \subset H$. Given a hereditary, saturated subset $H \subset E^0$,  define the graphs $E_H\coloneqq(H,r^{-1}(H),r|_H, s|_H)$ and $E\setminus H\coloneqq(E^0\setminus H, s^{-1}(E^0\setminus H), r|_{E^0\setminus H}, s|_{E^0 \setminus H})$.  Note that the restrictions of the range and source maps are well-defined since $H$ is saturated and hereditary.
\end{defi}

The following reformulation of \cite[Theorem~6.6]{KPRR97}, found as \cite[Theorem~4.9]{R05}, gives a bijection between saturated, hereditary subsets of $E^0$ and ideals of $C^*(E)$ in the case when $E$ is a row-finite graph with Condition~(K).

\begin{thm}\label{K_ideals}
	Let $E$ be a row-finite graph satisfying Condition~{\rm (K)}. For each saturated, hereditary subset $H \subset E^0$, define $I_H$ to be the ideal of $C^*(E)$ generated by $\{p_v:v \in H\}$. Then $$I_H=\clo{\mathrm{span}}\{s_\mu s_\nu^*: s(\mu)=s(\nu) \in H\}.$$
    Further, $H \mapsto I_H$ is an isomorphism between the lattice of saturated, hereditary subsets of $E^0$ and the lattice of ideals of $C^*(E)$. The quotient $C^*(E)/I_H$ is naturally isomorphic to $C^*(E\setminus H)$, and the graph algebra $C^*(E_H)$ is isomorphic to a full corner of $I_H$.
\end{thm}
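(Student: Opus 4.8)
The plan is to derive all four assertions from two standard structural inputs—an explicit computation with the spanning description of $C^*(E)$ and the multiplication rules for the generators, together with the gauge-invariant uniqueness theorem—with Condition~(K) entering at exactly one point: to guarantee that every ideal is gauge-invariant.

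First I would establish the spanning formula for $I_H$ by a direct computation, independently of Condition~(K). Writing $J=\clo{\mathrm{span}}\{s_\mu s_\nu^*: s(\mu)=s(\nu)\in H\}$, the inclusion $J\subseteq I_H$ is immediate from $s_\mu s_\nu^*=s_\mu p_{s(\mu)}s_\nu^*$ and $p_{s(\mu)}\in I_H$. For the reverse inclusion it suffices to check that $J$ is a closed, self-adjoint ideal containing each $p_v$ with $v\in H$; the latter follows by taking $\mu=\nu$ the trivial path at $v$. Using $C^*(E)=\clo{\mathrm{span}}\{s_\alpha s_\beta^*\}$ and the relations of Definition~\ref{ckdef}, the ideal property reduces to evaluating products $(s_\alpha s_\beta^*)(s_\mu s_\nu^*)$. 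The usual path-cancellation rules for $s_\beta^*s_\mu$ collapse each such product to a single term $s_{\mu'}s_{\nu'}^*$, and in every case the hereditarity of $H$ forces the new source vertex to lie in $H$ (the path witnessing comparability of $\beta$ and $\mu$ is exactly a path into a vertex of $H$). Self-adjointness of $J$ then handles multiplication on both sides, giving $I_H\subseteq J$.

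The heart of the theorem is that $H\mapsto I_H$ is a lattice isomorphism, and here I would exhibit the candidate inverse $I\mapsto H_I\coloneqq\{v\in E^0: p_v\in I\}$. That $H_I$ is hereditary and saturated is a short computation: hereditarity uses $p_{s(e)}=s_e^*p_{r(e)}s_e$ (from relations~\ref{ckdef1} and~\ref{ckdef2}), propagated along a path, while saturation uses relation~\ref{ckdef3}, $p_v=\sum_{r(e)=v}s_es_e^*=\sum_{r(e)=v}s_ep_{s(e)}s_e^*$, which is a finite sum by row-finiteness. By construction $H\subseteq H_{I_H}$, and equality follows once we know $p_v\neq 0$ in $C^*(E)/I_H$ for $v\notin H$, which I read off from the quotient statement below. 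The genuinely nontrivial direction is $I_{H_I}=I$ for an arbitrary ideal $I$: the inclusion $I_{H_I}\subseteq I$ is clear, and for $I\subseteq I_{H_I}$ I would invoke the two structural facts that Condition~(K) guarantees, as recalled in the introduction, that every ideal of $C^*(E)$ is gauge-invariant, and that the gauge-invariant uniqueness theorem forces a gauge-invariant ideal to be generated by the vertex projections it contains, hence to equal $I_{H_I}$. This step is where I expect the main obstacle to lie, and it is precisely where Condition~(K) is indispensable: without it one has ideals producing $C(\mathbb T)$-subquotients that are not of the form $I_H$.

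Finally, for the quotient I would verify that the images of $\{p_v:v\notin H\}$ and $\{s_e: s(e),r(e)\notin H\}$ in $C^*(E)/I_H$ form a Cuntz--Krieger $(E\setminus H)$-family, using saturation of $H$ to check relation~\ref{ckdef3} at each non-source of $E\setminus H$; the universal property then yields a surjection $C^*(E\setminus H)\twoheadrightarrow C^*(E)/I_H$, and gauge-equivariance together with nonvanishing of the vertex projections lets the gauge-invariant uniqueness theorem upgrade it to an isomorphism (in particular supplying the $p_v\neq 0$ needed above). For the corner statement I would use the spanning description to identify $C^*(E_H)$ with $\clo{\mathrm{span}}\{s_\mu s_\nu^*:\mu,\nu\text{ paths in }E_H\}$, realize this as a corner of $I_H$ cut by the (multiplier) projection $\sum_{v\in H}p_v$, and deduce fullness from hereditarity, which guarantees that every spanning element $s_\mu s_\nu^*$ of $I_H$ can be compressed into this subalgebra.
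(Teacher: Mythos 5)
A preliminary remark: the paper offers no proof of this statement; it is imported wholesale from \cite{KPRR97} in the form of \cite[Theorem~4.9]{R05}, so the comparison has to be with the standard proof in those sources. Your overall architecture matches it --- the spanning formula via the products $(s_\alpha s_\beta^*)(s_\mu s_\nu^*)$, the candidate inverse $I\mapsto H_I$, a Cuntz--Krieger $(E\setminus H)$-family in the quotient, and the corner cut by $\sum_{v\in H}p_v$ are all the right moves, and those computations are correct as sketched. However, there are two points where you assume what has to be proved. The first is the nonvanishing of $p_v$ modulo $I_H$ for $v\notin H$. You need this both to get $H=H_{I_H}$ and as a \emph{hypothesis} of the gauge-invariant uniqueness theorem applied to the surjection $C^*(E\setminus H)\twoheadrightarrow C^*(E)/I_H$ (that theorem requires the vertex projections to have nonzero image in the \emph{codomain}), yet you propose to obtain it as a \emph{consequence} of that very application. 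This is circular. The standard repair is a separate construction: extend the universal Cuntz--Krieger $(E\setminus H)$-family to a Cuntz--Krieger $E$-family by setting the projections over $H$ and the partial isometries of edges with source in $H$ equal to zero --- hereditarity and saturation are exactly what keep relation~\ref{ckdef3} valid --- which yields a $^*$-homomorphism $C^*(E)\to C^*(E\setminus H)$ annihilating $I_H$ but no $p_v$ with $v\notin H$; it descends to an inverse of your surjection, giving the quotient isomorphism with no uniqueness theorem at all.

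The second and more serious gap is in the surjectivity of $H\mapsto I_H$, which you correctly identify as the crux. Invoking ``under Condition~(K) every ideal is gauge-invariant'' from the paper's introduction is not legitimate here: that assertion is an equivalent reformulation of the surjectivity statement itself, recorded in the introduction as background precisely because it is a consequence of this theorem. The argument that actually closes the loop is: given an ideal $I$, put $H=H_I$, so that $I_H\subseteq I$ and the induced surjection $C^*(E\setminus H)\cong C^*(E)/I_H\to C^*(E)/I$ sends every vertex projection to a nonzero element by the definition of $H_I$; Condition~(K) passes to $E\setminus H$ (this is Lemma~\ref{kpasses}) and implies Condition~(L), so the Cuntz--Krieger uniqueness theorem forces this surjection to be injective, whence $I=I_H$. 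Thus Condition~(K) enters through the Cuntz--Krieger uniqueness theorem applied to the quotient graph, not through a previously established gauge-invariance of ideals; with these two repairs your outline becomes the standard proof.
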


Finally, we will require some extension theory to prove Theorem~\ref{thmB}. For a thorough treatment of extensions, we refer the reader to \cite[Chapter~VII]{B98}. Given a $C^*$-algebra $A$, let $M(A)$ denote the multiplier algebra of $A$, and let $Q(A)\coloneqq M(A)/A$ be the corona algebra of $A$. For $C^*$-algebras $I$ and $B$, an \emph{extension of $B$ by $I$} is a $C^*$-algebra $A$ with a short exact sequence of the form
$$0 \longrightarrow I \longrightarrow A \overset\pi\longrightarrow B  \longrightarrow 0.$$
For any such extension, the inclusion $I \into M(I)$ can be canonically extended to a $^*$-homomorphism $\lambda \colon A \to M(I)$. This produces an associated \emph{Busby map} $\beta\colon B \to Q(I)$. An extension is \emph{full} if the Busby map of the extension is full in the sense that $\beta(b)$ generates $Q(I)$ as an ideal for all non-zero $b \in B$. If $a \in A\setminus I$ is such that $\lambda(a)$ is full in $M(I)$, then $\beta(\pi (a))$ is full in $Q(I)$ since $\beta \circ \pi = \pi_I \circ \lambda$, where $\pi_I\colon M(I)\to Q(I)$ is the quotient map. Therefore, to show an extension is full, it suffices to show that $\lambda$ is full.

\section{A Reduction to Finite Graphs}\label{sec:reduction}

In this section, we will prove that Theorem~\ref{thmA} can be reduced to the case of finite graphs. In the following section, it will be proven that $\ndim{C^*(E)}\leq 2$ whenever $E$ is a finite graph with Condition~(K). For countably infinite graphs with Condition~(K), an inductive limit approximation $\varinjlim C^*(E_i) = C^*(E)$, for a suitable sequence of finite subgraphs $E_i \subset E$, produces an upper bound on the nuclear dimension as 
$$\ndim{\varinjlim \, C^*(E_i)}\leq \liminf \ndim{C^*(E_i)} \leq 2$$
by \cite[Proposition~2.3]{WZ10}.  Throughout this section, $E$ will be a row-finite graph with Condition~(K); although we do not explicitly assume $E$ is infinite, this will be the case of interest.  

We first construct arbitrarily large finite subgraphs of $E$ with Condition~(K).

\begin{lem}\label{K_subgraph}
	Let $E$ be a row-finite directed graph with Condition~{\rm (K)} and let $F \subset E$ be a finite subgraph. Then there is a finite subgraph $F' \subset E$ containing $F$ which satisfies Condition~{\rm (K)} and the additional property that every vertex of $F$ lying on a cycle in $E$ has at least two distinct return paths in $F'$.
\end{lem}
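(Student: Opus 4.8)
The plan is to produce $F'$ satisfying the single, stronger property $(\star)$: every vertex of $F'$ that lies on a cycle in $E$ admits at least two distinct return paths inside $F'$. Property $(\star)$ is precisely the ``additional property'' in the statement, and it also forces Condition~(K) for $F'$: if $v \in (F')^0$ lies on a cycle of $F'$, then $v$ lies on a cycle of $E$ (since $F' \subseteq E$), so $(\star)$ supplies two return paths for $v$ in $F'$, while any vertex of $F'$ not lying on a cycle of $F'$ satisfies Condition~(K) vacuously. Thus it suffices to build a finite subgraph $F' \supseteq F$ with property $(\star)$.

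To construct $F'$, I would let $C_F \subseteq F^0$ be the finite set of vertices of $F$ lying on a cycle in $E$. Since $E$ has Condition~(K), each $v \in C_F$ has two distinct return paths $\mu_v \neq \nu_v$ in $E$; let $F'$ be the finite subgraph obtained by adjoining to $F$ all vertices and edges occurring in $\{\mu_v, \nu_v : v \in C_F\}$. By construction each $v \in C_F$ already has two distinct return paths in $F'$, so the only issue is the newly adjoined vertices. The reduction to a combinatorial statement is as follows. I would isolate the claim: \emph{if $G$ is a finite strongly connected graph in which some vertex has at least two distinct return paths, then every vertex of $G$ has at least two distinct return paths.} Granting this, $(\star)$ follows: let $u \in (F')^0$ lie on a cycle in $E$ and let $H$ be the induced subgraph of $F'$ on the strongly connected component of $u$ in $F'$. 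If $u \in C_F$ we are done; otherwise $u$ lies on $\mu_v$ or $\nu_v$ for some $v \in C_F$, and since a return path at $v$ is a closed walk based at $v$, every vertex of $\mu_v$ and $\nu_v$ — in particular $u$ — lies in the same strongly connected component as $v$. Hence $v \in H^0$ and both $\mu_v, \nu_v$ are return paths of $v$ inside $H$, so the claim, applied to $H$, yields two distinct return paths for $u$ inside $H \subseteq F'$.

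The main obstacle is the claim, which I expect to prove through the structural fact that \emph{a finite strongly connected graph possessing a vertex with a unique return path must be a single simple cycle}; in a simple cycle every vertex has exactly one return path, contradicting the hypothesis, so the claim follows once this is established (strong connectivity already gives every vertex at least one return path). To prove the structural fact, suppose $w$ has a unique return path. If $w$ had two distinct out-edges, then combining each with a shortest path from its range back to $w$ (which meets $w$ only at its end) would give two return paths with distinct first edges; so $w$ has a unique out-edge $w \to y$, and symmetrically a unique in-edge $x \to w$. Consequently the return paths at $w$ correspond bijectively to walks from $y$ to $x$ in $G \setminus \{w\}$, so there is a unique such walk $Q$, and uniqueness forces $Q$ to be a simple path. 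Strong connectivity then shows every vertex lies on $Q$ — any other vertex $u$ would yield a walk $y \rightsquigarrow u \rightsquigarrow x$ in $G \setminus \{w\}$ distinct from $Q$ — and that $G$ carries no edges beyond those of $Q$ together with $x \to w$ and $w \to y$, since any extra edge would create a second $y$-to-$x$ walk (a shortcut, a parallel edge, or a detour around a cycle). Thus $G$ is the simple cycle $w \to y = q_0 \to \cdots \to q_\ell = x \to w$, completing the structural fact and hence the plan; the delicate point throughout is the bookkeeping needed to guarantee that the walks and return paths produced are genuinely distinct.
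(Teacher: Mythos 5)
Your construction of $F'$ is the same as the paper's (adjoin a pair of distinct return paths from $E$ to each vertex of $F$ lying on a cycle in $E$, which exist by Condition~(K) for $E$), and your observation that the ``additional property'' subsumes Condition~(K) for $F'$ matches the paper's logic. Where you genuinely diverge is in verifying the additional property at the \emph{newly adjoined} vertices. The paper does this by hand: it splits into three cases according to how such a vertex $u$ sits on the adjoined return paths $\mu^w,\nu^w$ (appearing once on one of them, several times on one of them, or once on each) and in each case explicitly splices and rotates $\mu^w$ and $\nu^w$ to exhibit two return paths at $u$, with the distinctness checks (counting occurrences of $w$, comparing lengths) done case by case. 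You instead reduce everything to a single structural dichotomy for finite strongly connected graphs---a vertex with a unique return path forces the whole graph to be a simple cycle, so if one vertex has two distinct return paths then all do---and apply it to the strongly connected component of $u$ in $F'$, which contains $v$ together with $\mu_v$ and $\nu_v$. This is correct and absorbs all of the paper's casework into one reusable lemma; the trade-off is that the structural fact itself needs the same kind of careful bookkeeping the paper spends on its cases. The one place your sketch is terse is the assertion that any vertex $u\neq w$ yields a $y$-to-$x$ walk through $u$ \emph{in $G\setminus\{w\}$}: a priori the paths $y\rightsquigarrow u$ and $u\rightsquigarrow x$ supplied by strong connectivity may pass through $w$, and you need to use the uniqueness of the in- and out-edges at $w$ to truncate after the last exit from $w$ (resp.\ before the first entry into $w$) to reroute around it. That detail is routine to fill in, so the argument goes through.
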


\begin{proof}
	If a vertex $v \in F^0$ lies on a cycle in $E$, find distinct return paths $\mu^v$ and $\nu^v$, and let $F_v$ be the subgraph of $E$ consisting of the return paths $\mu^v$ and $\nu^v$. If a vertex $v \in F^0$ does not lie on a cycle, let $F_v$ be the empty graph. We claim 
    $$ F'\coloneqq F \cup \Big(\bigcup_{v \in F_0}F_v\Big)$$ 
    satisfies Condition~(K). If $u\in F^0$, Condition~(K) for the graph $F'$ is satisfied at $u$ by construction. For a vertex $u$ on one of the return paths added to $F$, we have some vertex $w \in F^0$ so that $u$ lies on at least one of the return paths $\mu^w$ or $\nu^w$. We consider the following exhaustive cases:
	\begin{enumerate}
		\item $u$ lies on exactly one of $\mu^w$ or $\nu^w$ and is only the range of a single edge in that return path;
		\item $u$ is the range of multiple edges on one of the return paths $\mu^w$ or $\nu^w$;
		\item $u$ lies on both $\mu^w$ and $\nu^w$, and is the range of exactly one edge on each path.
	\end{enumerate}
	We will construct two distinct return paths for $u$ in each case. Let $\mu^w=\mu_1\mu_2\dots\mu_m$ and $\nu^w=\nu_1\nu_2\dots\nu_n$.
	\par Case (i): Without loss of generality, suppose that $u$ lies on $\mu^w$. Let $\mu_i$ be the edge with $r(\mu_i)=u$. Consider the paths $\mu= \mu_i\dots\mu_{m-1}\mu_m\mu_1\dots \mu_{i-1}$ and $\nu=\mu_i\dots\mu_{m-1}\mu_m\nu^w\mu_1\dots \mu_{i-1}$. Note that these are both return paths for $u$ and are distinct as $w$ occurs once on $\mu$ but occurs twice on $\nu$.
	\par Case (ii): Suppose that $u$ is the range of multiple edges on $\mu^w$. Let $\mu_{i_1},\dots,\mu_{i_k}$ be the edges whose range is $u$, with $i_1<i_2<\cdots<i_k$. Let $\mu=\mu_{i_1} \mu_{i_1+1}\dots\mu_{i_2-1}$ and $\nu=\mu_{i_k}\mu_{i_k+1}\dots \mu_m\mu_1\dots \mu_{i_1-1}$. Observe that both $\mu$ and $\nu$ are return paths for $u$, and are distinct as $w$ lies on $\nu$ but does not lie on $\mu$.
	\par Case (iii): Suppose that $\mu_i$ and $\nu_j$ are the edges whose range is $u$. We claim that $\mu=\mu_i \mu_{i+1}\dots \mu_m\mu_1 \dots \mu_{i-1}$ and $\nu=\nu_j \nu_{j+1}\dots \nu_n\nu_1 \dots \nu_{j-1}$ are distinct return paths for $u$. By way of contradiction, suppose that $\mu=\nu$. Then the paths must be the same length, so $m=n$. Furthermore, $w$ occurs exactly once on both $\mu$ and $\nu$. There are $i-1$ edges before $w$ on $\mu$, and $j-1$ edges before $w$ on $\mu$, so $i=j$. Therefore, we have that $\mu_k=\nu_k$ for $1\leq k\leq m$, and so $\mu^w=\nu^w$, which contradicts that they are distinct return paths for $w$. Thus Condition~(K) is satisfied at $u$.
\end{proof}

The next step provides a construction to enlarge a finite subgraph $F \subset E$ to a finite subgraph $\tilde F \subset E$ so that $C^*(\tilde F)$ embeds into $C^*(E)$. By enumerating the edges of $E$, this will produce a direct limit decomposition of $C^*(E)$. This technique is similar to \cite[Definition~1.1 and Lemma~1.2]{RS04}, but as we are in the row-finite, Condition~(K) setting, we provide an alternative and somewhat simpler construction avoiding the use of dual graphs and relying on the Cuntz--Krieger uniqueness theorem (\cite[Theorem~2.13]{CK80}) instead of the gauge-invariant uniqueness theorem (\cite[Theorem~2.3]{aHR97}). The following definition is similar to the ``exit completion'' of Jeong and Park in \cite[Definition~3.2]{JP02}, reformulated for this paper's convention and ensuring Condition~(K) is preserved.

\begin{defi}
	Given a row-finite graph $E$ with Condition~(K) and a finite subgraph $F \subset E$, define a (K)-\emph{entrance completion} of $F$, denoted $\tilde{F}$, as follows. First, add distinct pairs of return paths to all possible vertices as in Lemma~\ref{K_subgraph} to produce a subgraph $F' \subset E$. Then, add to $F'$ all edges $e \in E^1$ so that there is an edge $f$ in $F'$ with $r(f)=r(e)$, along with the source vertices $s(e)$ not already in $F'$. Let $\tilde F \subset E$ be the resulting subgraph and note that $\tilde F$ is finite as $E$ is row-finite.
\end{defi}

The following result is an improvement of Lemma~\ref{K_subgraph}, producing arbitrarily large finite subgraphs of $E$ with Condition~(K) such that the inclusion of the subgraph canonically induces an inclusion of the graph $C^*$-algebras.

\begin{prop}\label{subgraph_iso}
	Given a row-finite graph $E$ with Condition~{\rm (K)} and a finite subgraph $F \subset E$, any ${\rm (K)}$-entrance completion $\tilde{F} \subset E$ of $F$ has Condition~{\rm (K)}. Furthermore, we have that $C^*(\tilde{F})$ is isomorphic to a $C^*$-subalgebra of  $C^*(E)$ containing $\{s_e: e \in F^1\}$ and $\{p_v: v \in F^0\}$.  Explicitly, there is an embedding $C^*(\tilde F) \hookrightarrow C^*(E)$ given by $q_v \mapsto p_v$ and $t_e \mapsto s_e$ for $v \in \tilde F^0$ and $e \in \tilde F^1$, where $(t, q)$ and $(s, e)$ are the universal Cuntz--Krieger $\tilde F$-family and $E$-family, respectively.
\end{prop}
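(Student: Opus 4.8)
The plan is to separate the statement into two essentially independent parts: first verifying that $\tilde F$ satisfies Condition~(K), and then producing the embedding through the universal property of $C^*(\tilde F)$ combined with the Cuntz--Krieger uniqueness theorem. Throughout, the guiding principle is that $\tilde F$ is a subgraph of $E$, so every cycle and every path of $\tilde F$ is a cycle or path of $E$.

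For Condition~(K), I would first record that the source vertices $s(e)$ adjoined in the final stage of the entrance completion receive no edges in $\tilde F$: every edge of $\tilde F$ either lies in $F'$ or is one of the adjoined edges, and in both cases its range is the range of an edge of $F'$, hence lies in $(F')^0$. Thus each such newly adjoined vertex has empty $\tilde F$-in-fiber and cannot lie on any cycle of $\tilde F$, so Condition~(K) holds vacuously there. For a vertex $v \in (F')^0$ lying on a cycle of $\tilde F$, that cycle is a cycle in $E$, so $v$ lies on a cycle in $E$; by the defining property of $F'$ in Lemma~\ref{K_subgraph}, $v$ then has two distinct return paths inside $F'$. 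Since being a return path is intrinsic to the path itself and $F' \subseteq \tilde F$, these remain two distinct return paths for $v$ in $\tilde F$, establishing Condition~(K) at $v$.

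For the embedding, the universal property of $C^*(\tilde F)$ reduces matters to checking that $\{p_v : v \in \tilde F^0\}$ and $\{s_e : e \in \tilde F^1\}$ form a Cuntz--Krieger $\tilde F$-family inside $C^*(E)$. Orthogonality of the projections, the orthogonality of ranges, and relations~\ref{ckdef1} and~\ref{ckdef2} are inherited verbatim from the ambient $E$-family, as these are subfamilies of $(s,p)$. The substantive point is relation~\ref{ckdef3}: I would prove that whenever $v \in \tilde F^0$ is the range of some edge of $\tilde F$, one has $\{e \in \tilde F^1 : r(e) = v\} = \{e \in E^1 : r(e) = v\}$. Indeed such a $v$ is the range of an edge of $F'$ (directly if the witnessing edge lies in $F'$, and by the construction of the adjoined edges otherwise), so the entrance completion adjoins every $E$-edge with range $v$; as $E$ is row-finite this fiber is finite, and relation~\ref{ckdef3} for $\tilde F$ at $v$ then literally coincides with relation~\ref{ckdef3} for $E$ at $v$, which holds. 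The universal property thus yields a $^*$-homomorphism $\Phi \colon C^*(\tilde F) \to C^*(E)$ with $\Phi(q_v) = p_v$ and $\Phi(t_e) = s_e$, whose image contains $\{s_e : e \in F^1\}$ and $\{p_v : v \in F^0\}$ because $F \subseteq \tilde F$.

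To finish, injectivity of $\Phi$ follows from the Cuntz--Krieger uniqueness theorem: Condition~(K) for $\tilde F$ implies Condition~(L) (every cycle has an exit), and each vertex projection $p_v$ is nonzero in $C^*(E)$ by the standard nonvanishing of vertex projections in a graph algebra, so the homomorphism determined by the Cuntz--Krieger family $\{p_v, s_e\}$ is injective. I expect the main obstacle to be the bookkeeping underlying relation~\ref{ckdef3}: one must verify that the entrance completion genuinely saturates the incoming edges at \emph{every} vertex that becomes a range in $\tilde F$ --- including ranges created by the newly adjoined edges, not only those already receiving edges in $F'$ --- so that no vertex ends up receiving a proper nonempty subfiber of its $E$-edges. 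By contrast, the Condition~(K) verification is comparatively routine once one notes that $\tilde F \subseteq E$ forces every cycle, and hence every obstruction to a second return path, to descend to $E$, where it is controlled by Lemma~\ref{K_subgraph}.
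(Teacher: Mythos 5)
Your proposal is correct and follows essentially the same route as the paper: verify Condition~(K) for $\tilde F$ by noting that vertices outside $F'$ are sources of $\tilde F$ and that vertices of $F'$ on cycles inherit two return paths from Lemma~\ref{K_subgraph}, then check that $\{p_v\}$ and $\{s_e\}$ form a Cuntz--Krieger $\tilde F$-family (the key point being that every receiver in $\tilde F$ receives exactly its $E$-edges) and invoke the Cuntz--Krieger uniqueness theorem for injectivity. Your treatment of relation~\ref{ckdef3} is in fact more detailed than the paper's one-line justification, and the concern you flag at the end is already resolved by your own observation that adjoined edges create no new range vertices.
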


\begin{proof}
	We first show that $\tilde{F}$ has Condition~(K). The subgraph $F' \subset E$ from Lemma~\ref{K_subgraph} contains $F$ and satisfies Condition~(K). Importantly, recall that in the construction of $F'$, every vertex of $F'$ lying on a cycle in $E$ has distinct return paths in $F'$.  Suppose $v$ is a vertex in $\tilde F$ which lies on a cycle in $\tilde F$.  Then $v$ belongs to $F'$; indeed, if $v$ is not in $F'$, then $v$ is a source in $\tilde F$.  By the construction of $F'$, $v$ has at least two return paths in $F'$, and hence also in $\tilde F$, so Condition~(K) holds.

    The projections $\{p_v : v \in \tilde F^0\}$ and partial isometries $\{s_e : e \in \tilde F^1\}$ in $C^*(E)$ form a Cuntz--Krieger $\tilde F$-family.  Indeed,  Definition~\ref{ckdef}\ref{ckdef1} and~\ref{ckdef2} clearly hold, and because any receiver in $\tilde{F}$ receives the same edges in $\tilde{F}$ and $E$, Definition~\ref{ckdef}\ref{ckdef3} is satisfied as well.  Hence there is a $^*$-homomorphism $C^*(\tilde F) \rightarrow C^*(E)$ given on generators by $q_v \mapsto p_v$ and $t_e \mapsto s_e$ for $v \in \tilde F^0$ and $e \in \tilde F^1$.  As each $p_v$ is non-zero and $\tilde F$ has Condition~(K), the Cuntz--Krieger uniqueness theorem (\cite[Theorem~2.13]{CK80}) implies that this $^*$-homomorphism is faithful.
\end{proof}

We now have all of the necessary ingredients to construct the promised inductive limit decompositions of graph $C^*$-algebras associated to graphs with Condition~(K).

\begin{thm}\label{rf_dim2}
 Let $E$ be a countable, row-finite graph with Condition~{\rm (K)}. Then there exists a sequence of finite graphs $E_{i}$ with Condition~{\rm (K)}, giving an inductive limit decomposition $C^*(E) \cong \varinjlim C^*(E_{i})$.
\end{thm}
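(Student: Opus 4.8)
The plan is to realize $C^*(E)$ as the inductive limit of the graph algebras of an increasing, exhausting sequence of finite subgraphs, using Lemma~\ref{K_subgraph} and Proposition~\ref{subgraph_iso} to guarantee that each subgraph carries Condition~(K) and embeds canonically. Since $E$ is countable, I first fix enumerations $E^0 = \{v_1, v_2, \dots\}$ and $E^1 = \{e_1, e_2, \dots\}$, and then build finite subgraphs $E_i \subset E$ recursively. Set $E_1 = \tilde{F}_1$, a (K)-entrance completion of the finite subgraph $F_1$ consisting of $v_1$, $e_1$, and the endpoints of $e_1$. Having constructed $E_i$, let $F_{i+1}$ be obtained from $E_i$ by adjoining $v_{i+1}$, $e_{i+1}$, and $s(e_{i+1}), r(e_{i+1})$, and set $E_{i+1} = \widetilde{F_{i+1}}$. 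A (K)-entrance completion contains the subgraph it completes, so $E_i \subset F_{i+1} \subset E_{i+1}$ and the $E_i$ are nested; since $v_{i+1} \in E_{i+1}^0$ and $e_{i+1} \in E_{i+1}^1$, the union $\bigcup_i E_i$ is all of $E$. Each $E_i$ is finite (row-finiteness of $E$ keeps the completion finite) and, by Proposition~\ref{subgraph_iso}, has Condition~(K).

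Next I assemble the inductive system. Proposition~\ref{subgraph_iso} provides injective $^*$-homomorphisms $\iota_i \colon C^*(E_i) \hookrightarrow C^*(E)$, each given on generators by $q_v \mapsto p_v$ and $t_e \mapsto s_e$. Because $E_i \subset E_{i+1}$, the generating set $\{p_v, s_e : v \in E_i^0,\ e \in E_i^1\}$ of $\iota_i(C^*(E_i))$ lies inside the generating set of $\iota_{i+1}(C^*(E_{i+1}))$, so $\iota_i(C^*(E_i)) \subset \iota_{i+1}(C^*(E_{i+1}))$. I can therefore define the connecting map $\phi_i \coloneqq \iota_{i+1}^{-1} \circ \iota_i \colon C^*(E_i) \to C^*(E_{i+1})$, an injective $^*$-homomorphism sending canonical generators to canonical generators and satisfying $\iota_{i+1} \circ \phi_i = \iota_i$. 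The family $(\iota_i)$ is thus compatible over the inductive system $(C^*(E_i), \phi_i)$, and the universal property yields a $^*$-homomorphism $\iota \colon \varinjlim C^*(E_i) \to C^*(E)$.

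Finally I check that $\iota$ is an isomorphism. For injectivity, the connecting maps $\phi_i$ are injective, so the canonical maps $C^*(E_i) \to \varinjlim C^*(E_i)$ are isometric with dense union of images; as $\iota$ restricts to the isometric map $\iota_i$ on each $C^*(E_i)$, it is isometric on a dense subalgebra and hence injective. For surjectivity, the image $\iota(\varinjlim C^*(E_i)) = \overline{\bigcup_i \iota_i(C^*(E_i))}$ is a $C^*$-subalgebra containing every $p_v$ and $s_e$, since $\bigcup_i E_i^0 = E^0$ and $\bigcup_i E_i^1 = E^1$; as these elements generate $C^*(E)$, the image is all of $C^*(E)$.

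I expect the genuinely delicate point to be organizational rather than analytic: one must arrange the finite subgraphs to be simultaneously nested, exhausting, and individually (K)-entrance completions, so that the single canonical rule $p_v \mapsto p_v$, $s_e \mapsto s_e$ furnishes both the embeddings into $C^*(E)$ and the connecting maps in a coherent, commuting way. The substantive work—preservation of Condition~(K) under the completion and faithfulness of the embeddings via the Cuntz--Krieger uniqueness theorem—has already been done in Lemma~\ref{K_subgraph} and Proposition~\ref{subgraph_iso}, so what remains is essentially this bookkeeping together with the standard fact that a $^*$-homomorphism out of an inductive limit is injective precisely when it is isometric on each building block.
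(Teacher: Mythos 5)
Your proposal is correct and follows essentially the same route as the paper: enumerate the vertices and edges, recursively form nested (K)-entrance completions that exhaust $E$, and use Proposition~\ref{subgraph_iso} to identify each $C^*(E_i)$ with a subalgebra of $C^*(E)$ whose increasing union is dense. The paper simply treats the $C^*(E_i)$ directly as an increasing chain of subalgebras of $C^*(E)$, whereas you spell out the connecting maps and the universal-property argument explicitly; the content is the same.
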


\begin{proof}
	Let $e_1, e_2, \dots$ be an enumeration of the edges of $E$ and $v_1, v_2, \dots$ be an enumeration of the vertices. Let $F_1$ be a subgraph of $E$ containing $e_1$ and $v_1$. Construct a (K)-entrance completion $\tilde{F_1}$ of $F_1$. Let $F_2$ be a subgraph containing $\tilde{F}_1$, $e_2$, and $v_2$, and construct a (K)-entrance completion $\tilde{F_2}$. Continue in this manner to construct an increasing sequence of finite graphs $\tilde{F_i}$ with Condition~(K). Consider the $C^*$-algebras $C^*(\tilde{F_i})$ as subalgebras of $C^*(E)$ and $C^*(\tilde{F}_{i+1})$ using the $^*$-homomorphisms from Proposition~\ref{subgraph_iso}. As $\{s_e:e \in E^1\}$ and $\{p_v: v \in E^0\}$ are contained in the increasing union $ \bigcup_{i=1}^\infty C^*(\tilde{F}_i)$, we have $C^*(E)=\clo{\bigcup_{i=1}^\infty C^*(\tilde{F}_i)}$. Setting $E_i\coloneqq\tilde{F}_i$ produces the required direct limit decomposition.
\end{proof}

\section{Proof of Theorem \ref{thmA}}\label{sec:thmA}

Having justified a reduction to the case of finite graphs, we now endeavor to show that $\ndim{C^*(E)}\leq 2$ for all directed graphs $E$ with Condition~(K). We begin by showing that for finite graphs with Condition~(K) and no sources, the associated graph $C^*$-algebra is $\cO_\infty$-stable. This will be done by induction using that the $C^*$-algebra associated to a finite graph with Condition~(K) has finitely many ideals.  To facilitate the induction, we need to know the graphs corresponding to ideals and quotients of graph $C^*$-algebras (as in Theorem~\ref{K_ideals}) have Condition~(K) whenever the original graph has Condition~(K).  This is well-known, but we have been unable to find a precise reference in the literature.

\begin{lem}\label{kpasses}
	Let $E$ be a finite graph with Condition~{\rm (K)} and let $H \subset E^0$ be a saturated, hereditary subset. Then the graphs $E_H$ and $E\setminus H$ have Condition~{\rm (K)}.
\end{lem}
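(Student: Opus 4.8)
The plan is to reduce both assertions to a single structural fact about cycles and then compare return paths in $E$ with those in the subgraph vertex by vertex. First I would record the reformulation of Condition~(K) that a graph satisfies it at a vertex $v$ precisely when $v$ has either zero or at least two return paths (a vertex lies on a cycle if and only if it has at least one return path). The key observation is that, since $H$ is hereditary, every cycle in $E$ lies entirely within $H$ or entirely within $E^0\setminus H$: if a cycle meets a vertex $v\in H$, then every other vertex $w$ on the cycle admits a path to $v$ obtained by traversing the cycle, whence $w\in H$ by hereditariness.

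Next I would treat $E_H$, whose edge set is $r^{-1}(H)$. Fix $v\in H$ and let $\mu=\mu_1\cdots\mu_m$ be a return path for $v$ in $E$. Being a cycle through $v\in H$, all of its vertices lie in $H$ by the dichotomy above, and hence each edge $\mu_i$ has range in $H$ and so belongs to $r^{-1}(H)$; as the defining condition $r(\mu_i)\neq v$ for $1<i\leq m$ is intrinsic to the path, $\mu$ is a return path for $v$ in $E_H$. Conversely, any return path for $v$ in $E_H$ is a return path in $E$. Thus the return paths for $v$ are literally the same in both graphs, so Condition~(K) for $E$ at $v$ transfers verbatim to $E_H$.

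The argument for $E\setminus H$, whose edge set is $s^{-1}(E^0\setminus H)$, is symmetric. For $v\in E^0\setminus H$, a return path for $v$ in $E$ is a cycle through $v$ and hence lies entirely outside $H$ by the dichotomy; each of its edges then has source outside $H$ and so belongs to $s^{-1}(E^0\setminus H)$, giving a return path for $v$ in $E\setminus H$, and conversely. Hence the return paths for $v$ again coincide and Condition~(K) passes to $E\setminus H$. The argument is essentially bookkeeping once the cycle dichotomy is established; the only point requiring care is checking that $r^{-1}(H)$ and $s^{-1}(E^0\setminus H)$ contain exactly the edges needed for return paths to restrict and extend correctly, and I expect no serious obstacle. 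Note that only hereditariness of $H$ is used, not saturation.
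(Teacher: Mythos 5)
Your proposal is correct and follows essentially the same route as the paper: both arguments use hereditariness of $H$ to show that any return path in $E$ based at a vertex of $H$ (resp.\ of $E^0\setminus H$) stays entirely inside $H$ (resp.\ outside $H$), so the two distinct return paths guaranteed by Condition~(K) in $E$ survive in the subgraph. Your explicit ``cycle dichotomy'' and the check of the edge sets $r^{-1}(H)$ and $s^{-1}(E^0\setminus H)$ are just a slightly more careful packaging of the same idea, and your remark that saturation is not needed is consistent with the paper's proof.
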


\begin{proof}
	Suppose that $v \in E_H^0$ lies on a cycle in $E_H$.  As $v$ also lies in a cycle in $E$, there are distinct return paths $\mu$ and $\nu$ for $v$ in $E$. Because $H$ is hereditary, we have that the vertices of these paths belong to $H$, and so the return paths $\mu$ and $\nu$ in $E$ are actually return paths in $E_H$. Therefore $E_H$ has Condition~(K).
	
    Suppose that $w \in (E\setminus H)^0$ lies on a cycle in $E \setminus H$. As before, $w$ lies on a cycle in $E$, and so $w$ has distinct return paths $\mu$ and $\nu$ in $E$. If any vertex in either return path belongs to $H$, then $w \in H$ as $H$ is hereditary. Thus $\mu, \nu \in (E\setminus H)^*$. We conclude that $E \setminus H$ has Condition~(K).
\end{proof}

The following simple lemma is important for verifying $\cO_\infty$-stability when $E$ is a finite graph with no sources.

\begin{lem}\label{nosources}
	Suppose $E$ is a row-finite graph with no sources and $H \subset E^0$ is a saturated, hereditary subset. Then $E_H$ and $E\setminus H$ have no sources.
\end{lem}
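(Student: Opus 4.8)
The plan is to verify, separately for $E_H$ and $E\setminus H$, that every vertex of the subgraph is the range of at least one edge of that subgraph. Recall that a source is a vertex $v$ with $r^{-1}(v)=\emptyset$, so that $E$ having no sources means $r^{-1}(v)\neq\emptyset$ for every $v\in E^0$. In each case I would fix a vertex of the relevant subgraph and \emph{exhibit} an incoming edge that survives the passage to that subgraph. The case of $E_H$ is essentially immediate, whereas the case of $E\setminus H$ carries the real content, and there I expect to lean on saturation.

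For $E_H$, whose edge set is $r^{-1}(H)$, fix $v\in H$. Since $E$ has no sources there is an edge $e\in E^1$ with $r(e)=v$. As $r(e)=v\in H$, the edge $e$ already lies in $r^{-1}(H)$, so it is an edge of $E_H$ with range $v$. Hence no vertex of $E_H$ is a source.

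For $E\setminus H$, whose edge set is $s^{-1}(E^0\setminus H)$, fix $v\in E^0\setminus H$. Again $r^{-1}(v)\neq\emptyset$, but now the difficulty is that an incoming edge $e$ of $E$ survives in $E\setminus H$ only when $s(e)\notin H$; a priori every edge into $v$ could have its source inside $H$, which would leave $v$ a source of $E\setminus H$. This is precisely what saturation forbids: since $v\notin H$ and $r^{-1}(v)\neq\emptyset$, the defining property of a saturated set prevents $s(r^{-1}(v))\subseteq H$, so some $e\in E^1$ with $r(e)=v$ satisfies $s(e)\notin H$. This $e$ then lies in $s^{-1}(E^0\setminus H)$ and has range $v$, so $v$ is not a source of $E\setminus H$.

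The sole genuine obstacle is the $E\setminus H$ case, and it is dispatched by the contrapositive of the saturation condition; the hereditary hypothesis enters only through its role in making $E_H$ and $E\setminus H$ well-defined graphs, and is not otherwise used in the argument.
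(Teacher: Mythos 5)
Your proof is correct and follows essentially the same route as the paper's: exhibit an incoming edge of $E$ that survives in the subgraph, which is immediate for $E_H$ and uses the contrapositive of saturation for $E\setminus H$. Your closing remark that hereditarity is only needed to make $E_H$ well-defined (so that $s(e)\in H$ for $e\in r^{-1}(H)$) is accurate, and amounts to the same use the paper makes of it when noting the source vertex lies in $H$.
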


\begin{proof}
	Let $v$ be a vertex in $E_H$. Since $v$ is not a source of $E$, it must receive an edge $e$ from another vertex $u \in E^0$. Because $H$ is hereditary, $u \in H$, and so $e$ and $u$ belong to $E_H$. Therefore, $E_H$ has no sources.
	
    Suppose $w$ is a vertex in $E\setminus H$. As $w$ is not a source of $E$, $r^{-1}(w) \neq \emptyset$.  Let $s(r^{-1}(w))=\{w_1, \dots, w_n\}$. Because $H$ is saturated, there is some $w_j \notin H$. Thus, $w_j$ and the edges from $w_j$ to $w$ belong to $E\setminus H$, and so $E \setminus H$ has no sources.
\end{proof}

We are now ready to show that finite graphs with Condition (K) and no sources produce $\cO_\infty$-stable $C^*$-algebras. Note that by \cite[Theorem A]{BGSW22}, it follows that $\ndim{C^*(E)}=1$ in this case.

\begin{thm}\label{Oinf_stable}
	If $E$ is a finite graph with Condition~{\rm (K)} and no sources, then $C^*(E)$ is $\cO_\infty$-stable. 
\end{thm}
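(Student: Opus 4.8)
The plan is to induct on the number of ideals of $C^*(E)$. This number is finite: by Theorem~\ref{K_ideals} the ideals of $C^*(E)$ are indexed by the saturated, hereditary subsets of the finite set $E^0$. The external inputs I would rely on are the two standard permanence properties of $\cO_\infty$-stability for separable $C^*$-algebras---that $\cO_\infty$-stability is a Morita invariant (equivalently, $A$ is $\cO_\infty$-stable if and only if $A\otimes\cK$ is), and that it is preserved under extensions, in the sense that if $0\to I\to A\to B\to 0$ is an extension of separable $C^*$-algebras with $I$ and $B$ both $\cO_\infty$-stable, then so is $A$. The latter is the Toms--Winter permanence theorem, applicable here because $\cO_\infty$ is strongly self-absorbing and $K_1$-injective, and because graph $C^*$-algebras of countable graphs are separable.

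For the base case, suppose $C^*(E)$ is simple. Since $E$ is finite with no sources, every vertex receives an edge; tracing edges backwards and invoking finiteness of $E^0$ yields a cycle in $E$. By the AF/purely-infinite dichotomy for simple graph $C^*$-algebras, a simple $C^*(E)$ containing a cycle is purely infinite, and as every graph $C^*$-algebra is nuclear, separable, and satisfies the UCT, $C^*(E)$ is a Kirchberg algebra. Kirchberg's absorption theorem then gives $C^*(E)\otimes\cO_\infty\cong C^*(E)$, so $C^*(E)$ is $\cO_\infty$-stable.

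For the inductive step, assume $C^*(E)$ is not simple and that the theorem holds for every finite graph with Condition~(K) and no sources whose graph $C^*$-algebra has strictly fewer ideals. Choose a proper, nonempty saturated, hereditary $H\subset E^0$, producing the extension
\[
0\longrightarrow I_H\longrightarrow C^*(E)\longrightarrow C^*(E)/I_H\longrightarrow 0,
\]
in which $C^*(E)/I_H\cong C^*(E\setminus H)$ and $C^*(E_H)$ is a full corner of $I_H$, by Theorem~\ref{K_ideals}. Lemmas~\ref{kpasses} and~\ref{nosources} guarantee that $E_H$ and $E\setminus H$ are again finite graphs with Condition~(K) and no sources, and their algebras have strictly fewer ideals: the ideal lattice of $C^*(E\setminus H)$ is the interval $[I_H,C^*(E)]$, and that of $C^*(E_H)$---which is Morita equivalent to $I_H$---is the interval $[0,I_H]$, each a proper subset of the ideal lattice of $C^*(E)$ because $0\neq I_H\neq C^*(E)$. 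The inductive hypothesis makes $C^*(E_H)$ and $C^*(E\setminus H)$ $\cO_\infty$-stable; Morita invariance promotes this to $\cO_\infty$-stability of $I_H$, and extension permanence then yields $\cO_\infty$-stability of $C^*(E)$.

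The graph-theoretic bookkeeping is routine; the substance sits in the imported permanence results. The step I expect to require the most care is invoking extension permanence correctly---this is precisely where strong self-absorption and $K_1$-injectivity of $\cO_\infty$ enter---together with the Morita-invariance step used to transport $\cO_\infty$-stability from the corner $C^*(E_H)$ to the full ideal $I_H$. It is also worth confirming the base-case identification, namely that the dichotomy genuinely forces pure infiniteness once a cycle is present.
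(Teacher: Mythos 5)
Your proposal is correct and follows essentially the same route as the paper: the paper also reduces to simple subquotients (via a maximal chain of saturated, hereditary subsets rather than induction on the number of ideals, a cosmetic difference), handles the simple case by the cycle/purely-infinite dichotomy plus Kirchberg's $\cO_\infty$-absorption, transfers $\cO_\infty$-stability from the full corner $C^*(E_H)$ to $I_H$ by Brown's theorem and Morita invariance, and concludes with the Toms--Winter extension permanence theorem. The points you flag as needing care (separability, $K_1$-injectivity of $\cO_\infty$, and the dichotomy forcing pure infiniteness once a cycle is present) are exactly the hypotheses the paper's citations supply.
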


\begin{proof}
	Let $\emptyset=H_0\subset H_1 \subset \cdots \subset H_n =E^0$ be a maximal chain of saturated, hereditary subsets. Note that the graphs $E_{H_i}$ and $E_{H_{i}}\setminus H_{i-1}$ for $1<i\leq n$ have Condition~(K) and no sources by Lemmas~\ref{kpasses} and \ref{nosources}.  Further, each graph \mbox{$E_{H_i}\setminus H_{i-1}$} has no non-trivial saturated, hereditary subsets, and hence the $C^*$-algebra $C^*(E_{H_i} \setminus H_{i-1})$ is simple by Theorem~\ref{K_ideals}.
    Also, because $E_{H_1}$ is a finite graph with no sources, it must contain a cycle. Since $C^*(E_{H_1}) = C^*(E_{H_1} \setminus H_0)$ is simple, \cite[Remark~5.6]{BPRS00} then implies $C^*(E_{H_1})$ is purely infinite. Furthermore, $C^*(E_{H_1})$ is nuclear by \cite[Proposition~2.6]{KP99} (see also \cite[Corollary~4.5.4]{BrownOzawa}). Since $E_{H_1}$ is finite, $C^*(E_{H_1})$ is separable and unital, so it follows that $C^*(E_{H_1})$ is $\cO_\infty$-stable by \cite[Theorem~3.15]{KP00}. By Theorem~\ref{K_ideals}, the corresponding ideal $I_{H_1}$ of $C^*(E_{H_2})$, generated by $\{p_v:v \in H_1\}$, contains a copy of $C^*(E_{H_1})$ as a full corner, and $C^*(E_{H_2}\setminus H_1) \cong C^*(E_{H_2})/I_{H_1}$. By \cite[Theorem~2.8]{B77} and \cite[Corollary~3.2]{TW07}, we have that $I_{H_1}$ is $\cO_\infty$-stable. Using the same argument, $C^*(E_{H_2})/I_{H_1} \cong C^*(E_{H_2}\setminus H_1)$ is $\cO_\infty$-stable. By \cite[Theorem~4.3]{TW07}, $C^*(E_{H_2})$ is $\cO_\infty$-stable, as it is an extension of $\cO_\infty$-stable algebras. Proceeding in this manner proves that $C^*(E_{H_n})=C^*(E)$ is $\cO_\infty$-stable.
\end{proof}

We now prove that a specific choice of $H\subset E^0$ is saturated and hereditary and its complement contains no sources.  Let $E^{\leq \infty}$ denote the set of infinite paths $e_1 e_2 e_3 \dots$ in $E$ together with the set of all finite paths in $E$ whose source is a source in $E$.

\begin{lem}\label{seth}
	Let $E$ be a finite graph with Condition~{\rm (K)}. Let $H \subset E^0$ be the set of vertices $v \in E^0$ such that every path in $E^{\leq \infty}$ with range $v$ is finite.  Then $H$ is saturated and hereditary. Furthermore, $E_H$ contains no cycles, and $E\setminus H$ contains no sources.
\end{lem}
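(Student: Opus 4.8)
The plan is to reduce everything to a single reformulation of membership in $H$ and then read off all four claims from it. Unwinding the definition of $E^{\leq \infty}$, a finite path lying in $E^{\leq\infty}$ is of course finite and so never violates the defining property of $H$; that property can fail at $v$ only through an infinite path. Thus I would first record the equivalence
$$ v \in H \iff \text{no infinite path } e_1 e_2 e_3 \cdots \text{ in } E \text{ has } r(e_1) = v. $$
(Since $E$ is finite one could equivalently phrase this as ``$v$ is not reachable by a path from a vertex on a cycle,'' but the infinite-path version is all that is needed.) Throughout I would keep the conventions straight: edges run $s(e) \to r(e)$, paths $\mu_1 \cdots \mu_m$ compose via $s(\mu_i) = r(\mu_{i+1})$, so the range of a path is the range of its \emph{first} edge, and the single recurring move is to glue a finite path onto an infinite tail.

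For hereditariness, given $v \in H$ and a path $\mu = \mu_1 \cdots \mu_m$ from $w$ to $v$ (so $r(\mu_1) = v$ and $s(\mu_m) = w$), I would argue by contradiction: if $w \notin H$ there is an infinite path $e_1 e_2 \cdots$ with $r(e_1) = w = s(\mu_m)$, and then $\mu_1 \cdots \mu_m e_1 e_2 \cdots$ is an infinite path of range $v$, contradicting $v \in H$; hence $w \in H$. For saturation I would use the dual move of truncation: assuming $r^{-1}(v) \neq \emptyset$, $s(r^{-1}(v)) \subseteq H$, yet $v \notin H$, an infinite path $e_1 e_2 \cdots$ with $r(e_1) = v$ has tail $e_2 e_3 \cdots$ an infinite path of range $s(e_1)$, forcing $s(e_1) \notin H$; but $s(e_1) \in s(r^{-1}(v)) \subseteq H$, a contradiction, so $v \in H$.

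The remaining two claims reuse these observations. If $c$ lies on a cycle $\mu_1 \cdots \mu_m$ (with $r(\mu_1) = c = s(\mu_m)$), then the periodic infinite path $\mu_1 \cdots \mu_m \mu_1 \cdots \mu_m \cdots$ has range $c$, so $c \notin H$; since every vertex of $E_H$ lies in $H$ and every edge of $E_H$ is an edge of $E$, a cycle in $E_H$ would exhibit a cycle vertex inside $H$, which is impossible, so $E_H$ has no cycles. For the last claim, let $w$ be a vertex of $E \setminus H$, i.e.\ $w \notin H$, and take an infinite path $e_1 e_2 \cdots$ with $r(e_1) = w$; exactly as in the saturation step $s(e_1) \notin H$, so $e_1 \in s^{-1}(E^0 \setminus H)$ is an edge of $E \setminus H$ with range $w$, whence $w$ is not a source of $E \setminus H$.

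I do not expect a serious obstacle: the entire content is the reformulation in the first display together with consistent range/source bookkeeping, after which each assertion is a one-line concatenation or truncation. The point most deserving attention is the equivalence itself---in particular checking that the finite members of $E^{\leq\infty}$ are irrelevant to the defining condition on $H$---and applying the directional conventions correctly when gluing paths. (Finiteness of $E$ is not, in fact, used for these four structural conclusions.)
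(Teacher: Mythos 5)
Your proposal is correct and follows essentially the same route as the paper: extend a finite path by an infinite tail for hereditariness, truncate the first edge for saturation, use the periodic infinite path from a cycle to show $E_H$ is cycle-free, and observe that the first edge of an infinite path into $w$ survives in $E\setminus H$. The explicit reformulation of membership in $H$ and the range/source bookkeeping are exactly the content of the paper's argument, and your parenthetical remark that finiteness of $E$ is not needed here is also accurate.
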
	

\begin{proof}
	First, we show that $H$ is hereditary. Suppose $u \in H$ and there is a path $\mu$ from $v \in E^0$ to $u$. Then any path with range $v$ can be extended to a path with range $u$ using $\mu$. Thus there cannot be an infinite path in $E^{\leq\infty}$ with range $v$, so $v \in H$. 
 
    To show that $H$ is saturated, suppose $w \in E^0$ is not a source and satisfies $s(r^{-1}(w)) \subset H$.  Any path $\nu \in E^{\leq \infty}$ with range $w$ must pass through a vertex in $s(r^{-1}(w))$. Removing the first edge from $\nu$ creates a path in $E^{\leq\infty}$ whose range is one of the vertices in $s(r^{-1}(w))$, and by assumption, this path must be finite.  Thus, when $s(r^{-1}(w))\subset H$, it follows that $w \in H$.
	
    By way of contradiction, suppose that $\mu$ is a cycle in $E_H$. Then $ \mu \mu \ldots\in E^{\leq\infty}$ is an infinite path, so $r(\mu)\notin H$. This is a contradiction, so $E_H$ must have no cycles. To show that $E\setminus H$ has no sources, suppose $u$ is a vertex in $E \setminus H$. Because $u \notin H$, we can find an infinite path $\mu\in E^{\leq\infty}$ with range $u$. Each vertex along $\mu$ does not belong to $H$, and so $u$ is not a source in $E\setminus H$.
\end{proof}

We are now ready to prove that all graphs with Condition~(K) have $C^*$-algebras whose nuclear dimension is at most $2$. In the case of a finite graph, Theorem~\ref{Oinf_stable} provides a bound on the nuclear dimension of the quotient, while the ideal is easily verified to have nuclear dimension zero. The Drinen--Tomforde desingularization process (\cite[Definition~2.2]{DT05}) and Theorem~\ref{rf_dim2} are used to prove the result in the case of an infinite graph. 

\begin{proof}[Proof of Theorem~\ref{thmA}]
	We begin with $E$ being a finite graph with Condition~(K).  As in Lemma~\ref{seth}, let $H\subset E^0$ be the set of vertices $v \in E^0$ such that every path in $E^{\leq \infty}$ with range $v$ is finite. Theorem~\ref{K_ideals} implies that $C^*(E_H)$ is isomorphic to a full corner of the ideal $I_H$ generated by $\{p_v:v \in H\}$ and $C^*(E\setminus H) \cong C^*(E)/I_H$.  Furthermore, as $E_H$ is a finite graph with no cycles, $C^*(E_H)$ is a finite dimensional $C^*$-algebra by \cite[Corollary~2.3]{KPR98}, and hence $\ndim{C^*(E_H)}=0$. By \cite[Corollary~2.8]{WZ10}, $$\ndim{I_H}=\ndim{C^*(E_H)}=0.$$  Since $E \setminus H$ has no sources (Lemma~\ref{seth}), $C^*(E\setminus H)$ is $\cO_\infty$-stable by Theorem~\ref{Oinf_stable}. It follows that $\ndim{C^*(E\setminus H)}=1$ by \cite[Theorem~A]{BGSW22}. Therefore $C^*(E)$ is the extension of a $C^*$-algebra with nuclear dimension $1$ by a $C^*$-algebra with nuclear dimension $0$, and so $\ndim{C^*(E)}\leq 2$ by \cite[Proposition~2.9]{WZ10}. 
 
    Now, suppose $E$ is an infinite graph with Condition~(K). Let $F$ be a Drinen--Tomforde desingularization of $E$ as in \cite[Definition~2.2]{DT05}. As $E$ has Condition~(K), \cite[Lemma 2.7]{DT05} implies that $F$ has Condition~(K) and is row-finite. By \cite[Theorem~2.11]{DT05}, $C^*(E)$ is isomorphic to a full corner of $C^*(F)$, and \cite[Corollary~2.8]{WZ10} yields $\ndim{C^*(E)}=\ndim{C^*(F)}$. Using Theorem~\ref{rf_dim2}, construct a sequence of finite graphs $F_i$ with Condition~(K) so that $C^*(F)=\varinjlim C^*(F_i)$. Then, by \cite[Proposition~2.3]{WZ10} and the first half of this proof, we have
	\begin{align*}
		\ndim{C^*(E)}=\ndim{C^*(F)}\leq \liminf \ndim{C^*({F}_i)}\leq 2. &\qedhere
	\end{align*} 
\end{proof}

\section{Proof of Theorem \ref{thmB}}\label{sec:thmB}

For certain extensions, the following result of Evington improves the bound on the nuclear dimension of an extension given in \cite[Proposition~2.9]{WZ10}  by 1.  We will use this to prove Theorem~\ref{thmB} by showing that under certain combinatorial restrictions on the graph, we can drop the upper bound on nuclear dimension from 2 to 1.

\begin{thm}[{\cite[Theorem~1.1]{E22}}]\label{thm:full}
	If $0 \to J \to A \to B \to 0$ is a full extension of a  stable, separable $C^*$-algebra $J$ by a separable, nuclear, $\cO_\infty$-stable $C^*$-algebra $B$, then
	$$1 \leq \ndim{A}\leq \ndim{J}+1.$$
	In particular, if $J$ is a stable AF-algebra, then $\ndim{A}=1$.
\end{thm}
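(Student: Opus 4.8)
The lower bound is immediate. Being a nonzero $\cO_\infty$-stable algebra, $B$ is traceless and in particular not stably finite, hence not AF; since AF-ness passes to quotients, $A$ is not AF either. As $A$ is separable (being an extension of separable algebras) and a separable $C^*$-algebra has nuclear dimension $0$ exactly when it is AF (\cite[Example~6.1(i)]{KW04}), we conclude $\ndim{A}\geq 1$.

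For the upper bound the natural starting point is the Winter--Zacharias extension estimate \cite[Proposition~2.9]{WZ10}, giving $\ndim{A}\leq \ndim{J}+\ndim{B}+1$, together with \cite[Theorem~A]{BGSW22}, which yields $\ndim{B}=1$ for the separable, nuclear, $\cO_\infty$-stable algebra $B$. These combine to the weaker bound $\ndim{A}\leq \ndim{J}+2$. The entire content of the theorem is to remove one color, namely the interpolating color that the general extension estimate must spend gluing the ideal approximation to the quotient approximation. The plan is to implement the Brake--Winter color-reuse idea \cite{BW19} abstractly, using fullness and stability to make the glue unnecessary.

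Concretely, I would fix a finite set $\cF\subset A$ and $\epsilon>0$ and proceed as follows. Using the $\cO_\infty$-stability of $B$, produce a two-colored approximation $B\to G_0\oplus G_1\to B$ whose two order-zero maps are built from the isometries of an internal copy of $\cO_\infty$; this is precisely the structure behind $\ndim{B}=1$. Lift these isometries through the quotient map $\pi$ to elements of $A$ satisfying the Cuntz relations modulo $J$, so that one of the two colors is realized by a genuine order-zero map on $A$ whose defect is supported in the ideal. Then build a $(\ndim{J}+1)$-colored approximation of the relevant part of $J$, including this defect. The reuse step folds the residual quotient color into the ideal colors: since $J\cong J\otimes\cK$ is stable, it supplies a sequence of mutually orthogonal isometries furnishing the orthogonal room to carry the finite-dimensional ideal approximants alongside the residual quotient color inside the \emph{same} $\ndim{J}+1$ colors, and since the extension is \emph{full}, the ideal is accessible throughout the support of the lifted quotient data, so no element of $\cF$ escapes the combined approximation. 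Counting colors gives $1+(\ndim{J}+1)=\ndim{J}+2$, i.e.\ $\ndim{A}\leq \ndim{J}+1$.

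The main obstacle is to carry out this folding while keeping the combined map genuinely order zero and contractive, uniformly in the parameters: a sum of two order-zero maps whose ranges are only \emph{approximately} orthogonal need not be order zero, so one must nest the ideal approximants exactly inside the supports manufactured from the stable structure rather than merely approximately. Producing these on-the-nose support relations---choosing the lifts, the orthogonal isometries, and the finite-dimensional models compatibly---is the quantitative heart of the argument, and it is exactly here that fullness (to see all of $J$) and stability (to create orthogonal room) are indispensable. Granting this, the final assertion is immediate: when $J$ is a stable AF-algebra we have $\ndim{J}=0$, so the upper bound reads $\ndim{A}\leq 1$, and together with $\ndim{A}\geq 1$ this gives $\ndim{A}=1$.
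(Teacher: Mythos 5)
The paper does not actually prove this statement: it is imported verbatim as \cite[Theorem~1.1]{E22} and used as a black box, so there is no internal proof to compare yours against. Judged on its own terms, your lower-bound argument is complete and correct: a nonzero $\cO_\infty$-stable algebra is traceless and not stably finite, hence not AF; quotients of AF algebras are AF, so $A$ is not AF; and a separable $C^*$-algebra has nuclear dimension $0$ exactly when it is AF, giving $\ndim{A}\geq 1$.

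The upper bound is where the entire content of the theorem lies, and there your argument is a strategy description rather than a proof. You correctly name the goal (reuse one of the two quotient colors to carry the ideal approximation, in the spirit of \cite{BW19}) and correctly locate the difficulty (the folded map must have \emph{exactly} orthogonal supports to remain order zero), but you then write ``Granting this, the final assertion is immediate'' --- and what is being granted is precisely the theorem. Concretely: (a) fullness is invoked only as the slogan that ``the ideal is accessible,'' which is not an argument; in Evington's proof fullness and stability enter through the Elliott--Kucerovsky purely large condition \cite{EK01} and the Kucerovsky--Ng results \cite{KN06}, which make the extension absorbing and allow it to be put into a standard form before any color bookkeeping begins. (b) Lifting a single cpc order zero map from a finite-dimensional algebra along $\pi$ is projectivity, but arranging the lifted quotient color and the $\ndim{J}+1$ ideal colors to have genuinely (not approximately) orthogonal supports, uniformly over $\cF$ and $\epsilon$, is the technical core of \cite{E22}, and no construction is offered. (c) There is no verification that the combined maps are contractive or that the approximation error actually closes up. So the proposal is a reasonable account of why the theorem should be true and of the circle of ideas behind it, but it does not establish the upper bound.
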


Consider a finite graph $E$ with Condition~(K) and, as in Lemma~\ref{seth}, let $H \subset E^0$ denote the set of vertices $v \in E^0$ such that every path $\mu \in E^{\leq \infty}$ with range $v$ is finite.  Then consider the extension
$$ 0 \longrightarrow I_H \longrightarrow C^*(E) \longrightarrow C^*(E \setminus H) \longrightarrow 0.$$
In the proof of Theorem~\ref{thmA}, we showed $C^*(E\setminus H)$ is $\mathcal O_\infty$-stable and $I_H$ is an AF-algebra.  If $I_H$ is stable and the extension is full, then Theorem~\ref{thm:full} implies $C^*(E)$ has nuclear dimension at most 1.   Our goal is to characterize when  these properties hold in terms of the structure of the graph $E$.  

The following lemma provides a characterization of stability.

\begin{lem}\label{ideal_stable}
    Let $E$ be a finite graph and let $H \subset E^0$ be as in Lemma~\ref{seth}.   Then $I_H$ is stable if and only if every source in $E$ connects to a cycle in $E$. Furthermore, in this case, $I_H \cong \cK^{\oplus m}$, where $m \geq 0$ is the number of sources in $E$.
\end{lem}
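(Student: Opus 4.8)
The plan is to realize $I_H$ as an internal direct sum of elementary building blocks indexed by the sources of $E$, each of which is visibly either a full matrix algebra or a copy of $\cK$. For a source $v$, I would first note that $p_v$ is a minimal projection of $C^*(E)$: since $v$ receives no edges, the only path with range $v$ is the trivial one, so $p_v C^*(E) p_v = \mathbb C p_v$. By the computation underlying Theorem~\ref{K_ideals}, the ideal $J_v$ generated by $p_v$ is $\clo{\mathrm{span}}\{s_\mu s_\nu^* : s(\mu)=s(\nu)=v\}$. Writing $P_v$ for the set of paths in $E$ with source $v$, I would then check that $\{s_\mu s_\nu^* : \mu,\nu\in P_v\}$ is a genuine system of matrix units. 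The key point is that, because $v$ is a source, no edge has range $v$; hence, for $\nu,\nu'\in P_v$, if one were a proper initial segment of the other (matching from the range end), the longer path would have to traverse an edge with range $v$, which is impossible. This forces $s_\nu^* s_{\nu'}=\delta_{\nu\nu'}\,p_v$, so $J_v\cong\cK(\ell^2(P_v))$, which is $M_{\abs{P_v}}$ when $P_v$ is finite and $\cK$ when $P_v$ is infinite.

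Next I would identify when $P_v$ is infinite. As $E$ is finite, $P_v$ is infinite if and only if there are arbitrarily long paths out of $v$, which by the pigeonhole principle occurs exactly when some path from $v$ meets a cycle; that is, $J_v\cong\cK$ precisely when $v$ connects to a cycle in $E$, and $J_v$ is finite dimensional otherwise.

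The heart of the argument is to assemble these blocks, showing $I_H=\bigoplus_{v}J_v$ (over the sources $v$ of $E$) as an internal direct sum of orthogonal ideals. The inclusion $\bigoplus_v J_v\subseteq I_H$ holds because each source lies in $H$ by Lemma~\ref{seth}, so $p_v\in I_H$. For the reverse inclusion it suffices to show $p_w\in\sum_v J_v$ for every $w\in H$, and here I would repeatedly apply the Cuntz--Krieger relation~\ref{ckdef}\ref{ckdef3} to expand $p_w=\sum_{r(e)=w}s_e s_e^*=\cdots=\sum_\mu s_\mu s_\mu^*$, the sum over paths $\mu$ with $r(\mu)=w$ whose source is a source of $E$. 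The crucial input is that $w\in H$ guarantees there is no infinite path in $E^{\leq\infty}$ with range $w$, so this expansion is a finite process terminating exactly at the sources, and each resulting $s_\mu s_\mu^*$ lies in $J_{s(\mu)}$. Orthogonality of $J_v$ and $J_{v'}$ for distinct sources follows from the same matrix-unit computation: a path out of $v$ and a path out of $v'$ can never be comparable, so $J_v J_{v'}=0$.

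Given the decomposition $I_H=\bigoplus_v J_v$, the conclusion is immediate: a finite direct sum of $C^*$-algebras is stable if and only if each summand is, and $\cK$ is stable while no nonzero matrix algebra is. Thus $I_H$ is stable exactly when every $J_v\cong\cK$, i.e.\ when every source of $E$ connects to a cycle, in which case $I_H\cong\cK^{\oplus m}$ with $m$ the number of sources. I expect the main obstacle to be the third step: carefully justifying that the expansion of $p_w$ terminates---which is precisely where the definition of $H$ from Lemma~\ref{seth} is used---and that the resulting elements assemble into the claimed orthogonal matrix-unit systems. This is where the combinatorics of the hereditary, saturated set $H$ and the source structure of $E$ interact most delicately.
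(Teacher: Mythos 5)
Your proposal is correct and follows essentially the same route as the paper: both expand each $p_w$ for $w\in H$ into $\sum_\mu s_\mu s_\mu^*$ over paths from sources (the termination of this expansion being exactly where the definition of $H$ enters), verify that the resulting elements $s_\mu s_\nu^*$ with $s(\mu)=s(\nu)$ a fixed source form a system of matrix units, and thereby identify $I_H$ with a direct sum over sources $u$ of $\cK(\ell^2(\{\mu : s(\mu)=u\}))$, which is stable precisely when each path set is infinite, i.e., when each source connects to a cycle. Your packaging as an internal orthogonal direct sum of the ideals $J_v$ is only a cosmetic reorganization of the paper's argument.
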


\begin{proof}
    By Theorem~\ref{K_ideals}, $I_H = \clo{\mathrm{span}}\{ s_\mu s_\nu^*: s(\mu)=s(\nu) \in H\}$, where $\mu$ and $\nu$ are finite paths in $E$. Given $v \in H$, define $E^*_v$ to be the set of finite paths from a source of $E$ to $v$. Because there is no path from a cycle to $v$, using Definition~\ref{ckdef}\ref{ckdef3} inductively, we may write 
    $$p_v=\displaystyle \sum_{\mu \in E^*_v}s_\mu s_\mu^*.$$ 
    In particular, $I_H = \clo{\text{span}}\{s_\mu s_\nu^*: s(\mu)=s(\nu) \text{ is a source}\}$.  Note that the elements $s_\mu s_\nu^*$ form a system a matrix units and hence give an isomorphism
    $$ I_H \cong \bigoplus_u \mathcal K(l^2(\mu \in E^* : s(\mu) = u)),$$
    where the direct sum is taken over all sources $u \in E^0$. (Recall that $E^*$ denotes the set of all finite paths in $E$.)
    
    If $u \in E^0$ is a source which does not connect to a cycle, then as $E$ is finite, there are only finitely many paths originating from $u$.  Hence $I_H$ has a finite dimensional direct summand, and $I_H$ is not stable.  Conversely, if each source $u$ connects to a cycle, there are infinitely many paths in $E$ originating from $u$, thus $I_H \cong \mathcal K^{\oplus m}$, where $m$ is the number of sources in $E$.  In particular, $I_H$ is stable.
\end{proof}

Having determined when $I_H$ is stable, it remains to characterize fullness of the relevant extension. The following condition provides a complete characterization in the presence of Condition~(K). The condition that $E$ has at least one source rules out the trivial case where $I_H = 0$.

\begin{lem}\label{full_ext}
	Let $E$ be a finite graph with Condition~{\rm (K)} which contains at least one source and satisfies that every source connects to a cycle. Let $H \subset E^0$ be as in Lemma~\ref{seth}. Then the extension $$0 \to I_H \to C^*(E)\to C^*(E\setminus H)\to 0$$ is full if and only every source of $E$ connects to every cycle in $E$.
\end{lem}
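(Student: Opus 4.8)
The plan is to make fullness of the extension explicit by computing the corona algebra $Q(I_H)$ from the description of $I_H$ in Lemma~\ref{ideal_stable}, reducing fullness to a simultaneous injectivity statement about the components of the Busby map, and then decoding that injectivity as a path-counting condition equivalent to connecting to every cycle. For a source $u$ of $E$, write $P_u$ for the set of finite paths with source $u$, so that Lemma~\ref{ideal_stable} gives $I_H \cong \bigoplus_u \cK(\ell^2(P_u))$, the sum over all sources $u$, with the $u$-summand isomorphic to $\cK$ when $u$ connects to a cycle and finite dimensional otherwise. Since $E$ is finite the sum is finite, so $M(I_H) = \bigoplus_u \mathcal{B}(\ell^2(P_u))$ and, because a finite dimensional $C^*$-algebra has trivial corona, $Q(I_H) \cong \bigoplus_{u \in S}\mathcal{Q}_u$, where $S$ is the set of sources connecting to a cycle and $\mathcal{Q}_u \coloneqq \mathcal{B}(\ell^2(P_u))/\cK(\ell^2(P_u))$ is the (simple) Calkin algebra of $\ell^2(P_u)$.

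First I would record how the canonical map $\lambda \colon C^*(E) \to M(I_H)$ looks in these coordinates: the $u$-component $\lambda_u \colon C^*(E) \to \mathcal{B}(\ell^2(P_u))$ is the path representation on $P_u$, sending $s_e$ to the partial isometry $\xi_\mu \mapsto \xi_{e\mu}$ (defined when $s(e)=r(\mu)$, and $0$ otherwise) and $p_w$ to the projection onto $\overline{\mathrm{span}}\{\xi_\mu : r(\mu)=w\}$. Since $\lambda_u(I_H)=\cK(\ell^2(P_u))$, each $\lambda_u$ descends to a $^*$-homomorphism $\beta_u \colon C^*(E\setminus H)\to \mathcal{Q}_u$, and these assemble into the Busby map $\beta$. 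As each $\mathcal{Q}_u$ is simple, an element of $\bigoplus_{u\in S}\mathcal{Q}_u$ generates the whole algebra as an ideal precisely when each of its components is nonzero; hence the extension is full if and only if $\beta_u$ is injective for every $u\in S$.

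Next I would analyze injectivity of $\beta_u$ using that $E\setminus H$ has Condition~(K) (Lemma~\ref{kpasses}), hence Condition~(L): by the Cuntz--Krieger uniqueness theorem (\cite[Theorem~2.13]{CK80}), $\beta_u$ is injective if and only if $\beta_u(q_w)\neq0$ for every vertex $w\in(E\setminus H)^0$, where $q_w$ is the image of $p_w$. From the formula for $\lambda_u$, $\beta_u(q_w)\neq0$ exactly when $\lambda_u(p_w)$ is non-compact, i.e.\ when there are infinitely many paths from $u$ to $w$. Recalling (Lemma~\ref{seth}) that $(E\setminus H)^0$ is precisely the set of vertices to which some cycle connects, the equivalence then falls out of a short combinatorial argument: if $u$ connects to every cycle, then for any $w\in(E\setminus H)^0$ one routes a path from $u$ into a cycle $c$ that connects to $w$, loops around $c$ arbitrarily often, and continues to $w$, producing infinitely many distinct $u$--$w$ paths; conversely, every vertex on a cycle lies in $(E\setminus H)^0$, so the existence of even one $u$--$w$ path for all such $w$ forces $u$ to connect to every cycle. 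This shows $\beta_u$ is injective iff the source $u\in S$ connects to every cycle, hence that the extension is full iff every source in $S$ connects to every cycle. The $(\Leftarrow)$ direction of the lemma is then immediate: if every source connects to every cycle then (as $E$ has a cycle) every source lies in $S$, so all $\beta_u$ are injective and the extension is full.

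The final, and most delicate, point is to match the combinatorial condition exactly, i.e.\ to upgrade ``$S$'' to ``every source'' in the $(\Rightarrow)$ direction; this is exactly what the corona computation puts at stake, since a source connecting to no cycle contributes only a finite dimensional summand to $I_H$ that is invisible in $Q(I_H)$ and can never obstruct fullness on its own. I would resolve this using Lemma~\ref{ideal_stable}, which identifies the regime in which this lemma is invoked: the ideal $I_H$ is stable exactly when every source connects to a cycle, and it is precisely in this stable setting---the one feeding the hypotheses of Theorem~\ref{thm:full}---that $S$ is the full set of sources, so that ``every source in $S$ connects to every cycle'' and ``every source connects to every cycle'' coincide. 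In that regime the contrapositive of $(\Rightarrow)$ reads cleanly: if some source $u_0$ fails to connect to a cycle $\mu_0$, then $u_0\in S$ contributes a genuine Calkin summand $\mathcal{Q}_{u_0}$, while for a vertex $w$ on $\mu_0$ there is no $u_0$--$w$ path, whence $q_w\neq0$ but $\beta_{u_0}(q_w)=0$, so $\beta(q_w)$ fails to generate $Q(I_H)$. I expect the main obstacle to be exactly this bookkeeping of which sources survive into the corona---equivalently, carrying out the reduction to the stable case so that the path-counting criterion is applied only to sources carrying an infinite dimensional summand---together with a careful verification of the path-representation formula for $\lambda_u$ and the resulting compactness criterion.
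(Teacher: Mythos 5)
Your proposal is correct in its core mechanics but follows a genuinely different route from the paper. The paper never computes $Q(I_H)$ explicitly: for the ``if'' direction it works upstairs in $M(I_H)\cong B(\cH)^{\oplus m}$, showing each $\theta_i(\lambda(p_w))$ has infinite rank for $w$ on a cycle and reducing a general $a\in C^*(E)\setminus I_H$ to such a $p_w$ via the ideal lattice of Theorem~\ref{K_ideals}; for the ``only if'' direction it invokes heavy machinery---fullness plus stability gives purely largeness by \cite[Proposition~3.7]{E22} (via \cite{KN06}), and then \cite[Lemma~7]{EK01} yields $p_u\preceq p_v$, contradicting the ideal structure. You replace all of this with the identification $Q(I_H)\cong\bigoplus_{u\in S}\mathcal Q_u$ (finite direct sum of simple Calkin algebras), the observation that fullness into such a sum is exactly simultaneous injectivity of the components $\beta_u$, and the Cuntz--Krieger uniqueness theorem (legitimate, since $E\setminus H$ has Condition~(K), hence~(L), by Lemma~\ref{kpasses}) to convert injectivity of $\beta_u$ into the non-compactness criterion ``infinitely many $u$--$w$ paths.'' Your path-representation formula for $\lambda_u$ and both combinatorial equivalences check out, and this approach is more elementary and self-contained than the paper's converse, at the cost of being specific to the concrete form of $I_H$.

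The one genuine discrepancy is the ``delicate point'' you flag: your argument establishes the ``only if'' direction only when every source connects to at least one cycle (so that $S$ is all sources), and appealing to ``the regime in which the lemma is invoked'' is not a proof of the biconditional as literally stated, which carries no such hypothesis. But this gap cannot be closed, because your computation in fact refutes the literal statement: take $E^0=\{u_0,u_1,v\}$ with two loops at $v$, one edge from $u_1$ to $v$, and $u_0$ isolated. This satisfies all hypotheses of Lemma~\ref{full_ext}, yet $I_H\cong\mathbb C\oplus\cK$, so $Q(I_H)$ is a single Calkin algebra and $\beta$ is an injection of $C^*(E\setminus H)\cong\cO_2$ into a simple corona; the extension is full, while the source $u_0$ connects to no cycle. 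The paper's own converse argument carries the same hidden restriction: \cite[Proposition~3.7]{E22} requires the ideal to be stable, which by Lemma~\ref{ideal_stable} is exactly the hypothesis that every source connects to a cycle---not available under the contradiction hypothesis when $u$ connects to no cycle at all. So your stable-regime version of the ``only if'' direction is the correct statement, and it is all that is needed, since in the proof of Theorem~\ref{thmB} the lemma is applied only after the sources connecting to no cycle have been split off into the finite dimensional summand $I_{H'}$. You should state your lemma with that hypothesis added (or assert only the ``if'' direction unconditionally) rather than leaving the mismatch implicit.
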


\begin{proof}
	We begin by showing the forward direction. By way of contradiction, suppose that the extension is full, but there is a source $u$ and a cycle $\mu$ with no path from $u$ to $\mu$.  Fix a vertex $v$ on $\mu$. By Lemma \ref{ideal_stable}, we may apply \cite[Proposition~3.7]{E22} (which follows from the results of \cite{KN06}) to conclude that the extension is purely large in the sense of \cite[Paragraphs~1 and~2]{EK01}. Furthermore, by \cite[Lemma 7]{EK01}, we have that every positive element in $C^*(E)\setminus I_H$ Cuntz dominates every positive element of $I_H$. Since $v$ lies on a cycle in $E$, $p_v \notin I_H$. Therefore, we have $p_u \preceq p_v$, implying that $p_u \in \clo{C^*(E)p_vC^*(E)}$.  
 
    By Theorem~\ref{K_ideals}, the ideal generated by $p_v$ corresponds to the smallest saturated, hereditary subset $S \subset E^0$ containing $v$.  The set $S$ can be defined explicitly as follows.  Let $S_0 \subset E$ denote the set of all vertices which admit a path to $v$. Note that $u \notin S_0$ and $S_0$ is the smallest hereditary subset of $E$ containing $v$.  By \cite[Remark~4.11]{R05}, the smallest saturated subset of $E^0$ containing $S_0$ is hereditary and hence equals $S$.  Since $u$ is a source and $u \notin S_0$, $E^0 \setminus \{u\}$ is a saturated set containing $S_0$, so $u \notin S$.  We conclude that $p_v \in I_S$ and $p_u \notin I_S$.  Thus $p_u$ is not in the ideal generated by $p_v$, which is a contradiction.
	
	We now show the converse. To this end, we wish to show that $\beta(b)$ is full in $Q(I_H)$ for all non-zero $b \in C^*(E \setminus H)$, where $\beta$ is the Busby map for the extension and $Q(I_H)$ is the corona algebra of $I_H$.  Let $\lambda\colon C^*(E) \to M(I_H)$ denote the canonical extension of the inclusion $I_H \into M(I_H)$. By Lemma~\ref{ideal_stable}, $I_H\cong  \cK^{\oplus m}$, where $m \geq 1$ denotes the number of sources in $E$.  Explicitly, if $u_1, \ldots, u_m$ denote the sources in $E$ and $\mathcal H_i$ denotes the Hilbert space with orthonormal basis $\{\delta_\mu: \mu \in E^*, s(\mu)=u_i\}$, then there is an embedding        $\theta'_i \colon \mathcal K(\mathcal H_i) \rightarrow I_H$
    defined by $\theta'_i(\delta_\mu \delta_\nu^*) = s_\mu s_\nu^*$, where $\delta_\mu \delta_\nu^*$ is the rank one operator $\xi \mapsto \langle \xi, \delta_\nu\rangle \delta_\mu$.  Then the $\theta_i'$ yield an isomorphism 
    \begin{equation*}
        \theta' = \bigoplus_{i=1}^m \theta_i' \colon \bigoplus_{i=1}^m \mathcal K(\mathcal H_i) \rightarrow I_H.
    \end{equation*}
    Let $\theta \colon M(I_H) \rightarrow \mathcal \bigoplus_{i=1}^m \mathcal B(\mathcal H_i)$ be the isomorphism induced by $(\theta')^{-1}$, and let $\theta_i \colon M(I_H) \rightarrow \mathcal B(\mathcal H_i)$ denote the $i$th coordinate of $\theta$ for $ 1 \leq i \leq m$. 

    To show that $\beta(b)$ is full for all nonzero $b \in C^*(E\setminus H)$, it suffices to show that the $\theta_i(\lambda(a))$ is full in $B(\cH_i)$ for all $a \in C^*(E)\setminus I_H$ and $1 \leq i \leq m$.  Given an $ a\in C^*(E) \setminus I_H$, the ideal generated by $a$ is generated by the vertex projections which it contains by Theorem~\ref{K_ideals}. We may then choose a $v \in E^0\setminus H$ so that $p_v \in \clo{C^*(E)aC^*(E)}\setminus I_H$, as otherwise $a$ must belong to $I_H$.  Further, for any $w \in E^0 \setminus H$, since $w$ receives an infinite path in $E^{\leq\infty}$ (by the definition of $H$) and $E$ is finite, there is a vertex $w'$ on a cycle and a path $\mu$ from $w'$ to $w$.  Then $p_{w'} = s_\mu^*s_\mu \sim s_\mu s_\mu^* \leq p_w$, so $p_{w'}$ is in the ideal generated by $p_w$.  So it suffices to show that $\theta_i(\lambda(p_w))$ is full in $B(\mathcal H_i)$ for all $w \in E^0$ lying on a cycle and $1 \leq i \leq m$.

    Note that for each source $u_i$, $\theta_i(\lambda(p_{u_i}))=\theta_i(p_{u_i})$ is the projection of $\cH_i$ onto $\operatorname{span}{(\delta_{u_i})}$, considering $u_i$ as a length zero path. In particular, $\theta_i(\lambda(p_{u_i}))$ is a non-zero projection in $B(\cH_i)$. Fix $w \in E^0$ such that there is a cycle $\mu \in E^*$ based at $w$ and let $1 \leq i \leq m$. By assumption, there is a path $\nu \in E^*$ with $s(\nu) =  u_i$ and $r(\nu) = w$. We may choose $\nu$ so that the cycle $\mu$ is not a subpath of $\nu$, ensuring that $s_\mu$ and $s_\nu$ have orthogonal ranges. Then
    $s_\nu s_\nu^* + s_\mu s_\mu^* \leq p_w$, and hence
    $$\theta_i(\lambda(s_\nu)) \theta_i(\lambda(s_\nu))^* + \theta_i(\lambda(s_\mu)) \theta_i(\lambda(s_\mu))^* \leq \theta_i(\lambda(p_w)).$$
    Since $\theta_i(\lambda(s_\nu))^* \theta_i(\lambda(s_\nu)) = \theta_i(\lambda(p_{u_i})) \neq 0$ we have $$\theta_i(\lambda(p_w))\gneq \theta_i(\lambda(s_\mu)) \theta_i(\lambda(s_\mu))^* \sim \theta_i(\lambda(s_\mu))^* \theta_i(\lambda(s_\mu)) = \theta_i(\lambda(p_w)),$$
    from which it follows that $\theta_i(\lambda(p_w))$ is a projection of infinite rank in $B(\mathcal H_i)$, and hence is full since $\cK(\cH_i)$ is the only non-trivial ideal of $B(\cH_i)$. We conclude that $\theta \circ \lambda$ is full, as required.
\end{proof}

We conclude with the proof of Theorem~\ref{thmB}.  In the setting where each source satisfies Theorem~\ref{thmB}\ref{B1} (i.e., every source connects to every cycle), the bound on the nuclear dimension follows from the work above.  We reduce to this case by showing that the sources which do not connect to any cycles can be split off as a finite dimensional direct summand of the graph $C^*$-algebra.

\begin{proof}[Proof of Theorem~\ref{thmB}]
	If $E$ does not contain a cycle, then by \cite[Corollary~2.3]{KPR98} $C^*(E)$ is finite dimensional and so has nuclear dimension zero. Thus we may assume that $E$ contains a cycle. We wish to reduce to the case where every source connects to every cycle. If this is already the case, we proceed to the next paragraph with $F\coloneq E$. Otherwise, let $S$ denote the set of sources that do not connect to any cycle. Let $H'\subset E^0$ be the set of vertices that only receive paths from $S$ and no other sources. This set is clearly saturated and hereditary, and by Theorem~\ref{K_ideals}, $$I_{H'}=\clo{\text{span}}\{s_\mu s_\nu^*: s(\mu)=s(\nu) \in H'\},$$ where $\mu$ and $\nu$ are finite paths in $E$. However, because no vertex in $H'$ connects to a cycle, there are only finitely many paths $\mu$ with $s(\mu)\in H'$. Applying the argument of \cite[Corollary~2.3]{KPR98}, it follows that $I_{H'}$ is a finite dimensional $C^*$-algebra and, in particular, contains a unit. We then have $C^*(E)\cong I_{H'}\oplus C^*(E\setminus H')$.  Since $I_{H'}$ is finite dimensional, its nuclear dimension is 0, so by \cite[Proposition~2.3]{WZ10}, it suffices to show $C^*(E \setminus H')$ has nuclear dimension 1.
    
    Set $F\coloneqq E \setminus H'$, and note that $F$ has at least one cycle and every source of $F$ connects to every cycle of $F$.  If $F$ has no sources, then $C^*(F)$ is $\mathcal O_\infty$-stable by Theorem~\ref{Oinf_stable}, and hence has nuclear dimension 1 by \cite[Theorem~A]{BGSW22}.  If $F$ has at least one source, define $H \subset F^0$ as in Lemma~\ref{seth} (except for $F$ instead of $E$). Then, by Lemma~\ref{full_ext}, the extension 
    $$0 \longrightarrow I_H \longrightarrow C^*(F)\longrightarrow C^*(F\setminus H)\longrightarrow 0$$
    is full. Additionally, by Lemma~\ref{ideal_stable}, $I_H$ is stable (and separable), and $C^*(F\setminus H)$ is separable, nuclear, and $\cO_\infty$-stable by Theorem~\ref{Oinf_stable}. Therefore, by Theorem~\ref{thm:full} $C^*(F)$ has nuclear dimension 1.
\end{proof}

\providecommand{\bysame}{\leavevmode\hbox to3em{\hrulefill}\thinspace}
\providecommand{\MR}{\relax\ifhmode\unskip\space\fi MR }
% \MRhref is called by the amsart/book/proc definition of \MR.
\providecommand{\MRhref}[2]{%
	\href{http://www.ams.org/mathscinet-getitem?mr=#1}{#2}
}
\providecommand{\href}[2]{#2}

\end{document}